\documentclass[11pt,a4paper]{article}

\usepackage{amsmath}
\usepackage{amsfonts}
\usepackage{enumerate}
\usepackage{amssymb}
\usepackage{accents}
\usepackage{amsthm}
\usepackage{parskip}
\usepackage{graphicx}
\usepackage{caption}
\usepackage{sidecap}
\usepackage{subcaption}
\usepackage{float}
\usepackage{epstopdf}
\usepackage{verbatim}
\setlength{\parindent}{10mm}

\usepackage[T1]{fontenc}
\usepackage[utf8]{inputenc}

\usepackage{mathrsfs}
\usepackage[pdftex,colorlinks,citecolor=cyan,linkcolor=magenta,urlcolor=blue,breaklinks=true]{hyperref}
\usepackage{cite}

\usepackage{booktabs}
\captionsetup{font=small,labelfont=bf,labelsep=quad,justification=raggedright,singlelinecheck=false}

\newtheoremstyle{examplestyletwo}  
  {5mm}       
  {3mm}       
  {\itshape}   
  {}        
  {\bfseries}  
  {}   
  {3mm}       
  {}           
  
\setlength{\topmargin}{-1.0cm}
\setlength{\textheight}{24cm}
\setlength{\textwidth}{15.5cm}
\setlength{\oddsidemargin}{4mm}
\setlength{\evensidemargin}{0mm}

\theoremstyle{examplestyletwo}
\newtheorem{theorem}{Theorem}[section]
\newtheorem{lemma}[theorem]{Lemma}

\newtheorem{corollary}[theorem]{Corollary}

\newcommand{\by}{{\bf y}}
\newcommand{\bz}{{\bf z}}

\newcommand{\bx}{{\boldsymbol \xi}}
\newcommand{\bpsi}{{\boldsymbol \psi}}
\newcommand{\bmu}{{\boldsymbol \mu}}
\newcommand{\bp}{{\bf p}}
\newcommand{\bd}{{\bf b_0}}

\newtheoremstyle{examplestyleone}  
  {3mm}       
  {10mm}       
  {\normalfont}   
  {}        
  {\bfseries}  
  {\quad}   
  {1mm}       
  {}           

\theoremstyle{examplestyleone}
\newtheorem{definition}{Definition}
\newtheorem{remark}{Remark}

\newtheorem{example}{Example}

\usepackage{fancyhdr}
\setlength{\headheight}{15pt}

\fancyhf{}
\fancyhead[LE,RO]{\thepage}
\fancyhead[RE]{\slshape \thechapter.~\leftmark }
\fancyhead[LO]{\slshape \thesection.~\rightmark }

\fancypagestyle{plain}{ %
  \fancyhf{} 
  
}

\everymath{\displaystyle}
\allowdisplaybreaks
\numberwithin{equation}{section}
\setcounter{tocdepth}{1}


\graphicspath{{plots//}}

\title{Global minima for optimal control of the obstacle problem} 
\author{Ahmad Ali\footnote{Schwerpunkt Optimierung und Approximation, 
Universit\"at Hamburg, Bundesstra{\ss}e 55, 20146 Hamburg, Germany.}, Klaus Deckelnick\footnote{Institut f\"ur Analysis und Numerik,
Otto--von--Guericke--Universit\"at Magdeburg, Universit\"atsplatz 2,
39106 Magdeburg, Germany} \& Michael Hinze\footnote{Schwerpunkt Optimierung und Approximation, 
Universit\"at Hamburg, Bundesstra{\ss}e 55, 20146 Hamburg, Germany.}}

\date{}

\begin{document}

\maketitle

\begin{abstract}
An optimal control problem subject to an elliptic obstacle problem is studied. We obtain a numerical approximation of this problem by discretising the PDE obtained via a Moreau--Yosida type penalisation. For the
resulting discrete control problem we provide a condition that allows to decide whether a solution of the necessary first order conditions is a global minimum. In addition we show that the corresponding result can
be transferred to the limit problem provided that the above  condition holds
uniformly in the penalisation and discretisation parameters. Numerical examples with unique global solutions are presented.
\end{abstract}

\noindent
{\bf Key words.} Optimal control, obstacle problem, Moreau--Yosida penalisation, finite elements, global solution. \\[2mm]
{\bf Mathematics Subject Classification.} 49J20, 49M05, 49M20, 65M15, 65M60.

\begin{center}
{\bf \LARGE  }
\end{center}  


\section{Introduction}
In this paper we are concerned with the following distributed optimal control problem for the elliptic obstacle problem
\[
(\mathbb{P}) \quad
 \min_{u \in L^2(\Omega)} J(u)=\frac{1}{2} \Vert y-y_0 \Vert_{L^2(\Omega)}^2  + \dfrac{\alpha}{2} \Vert u \Vert_{L^2(\Omega)} ^2 
\]
subject to 
\begin{equation} \label{obstacle}
y \in K, \quad \int_{\Omega} \nabla y \cdot \nabla ( \phi - y) dx \geq \int_{\Omega} (f+u) (\phi -y) dx \qquad \forall \phi \in K,
\end{equation}
where
\begin{displaymath}
K= \lbrace \phi \in H^1_0(\Omega) \, | \, \phi(x) \geq \psi(x) \mbox{ a.e. in } \Omega \rbrace.
\end{displaymath}
Furthermore, $\Omega \subset \mathbb{R}^d \, (d=2,3)$ is a bounded polyhedral domain,   $\psi \in H^1_0(\Omega)\cap H^2(\Omega)$ is the given obstacle, $y_0, f \in L^2(\Omega)$ and $\alpha>0$. \\
It is well--known that for a given function $u \in L^2(\Omega)$ the variational inequality (\ref{obstacle}) has a unique solution $y \in K$ and using standard arguments (cf. \cite[Theorem 2.1]{MP84})
one obtains the existence of a solution of $(\mathbb{P})$. However, a major issue in the analysis and numerical approximation of $(\mathbb{P})$ is the fact
that the  mapping $u \mapsto y$ is in general not G\^{a}teaux differentiable, so that the derivation of necessary first order optimality conditions becomes a difficult task. A common
approach in order to handle this difficulty consists in approximating  (\ref{obstacle}) by a sequence of penalised or regularised problems and then to pass to the limit, see
e.g. \cite{Ba84}, \cite{MP84}, \cite{IK00}, \cite{H01}, \cite{HK11}, \cite{KW12a}, \cite{SW13} and \cite{MRW15}. In our work we  employ a Moreau--Yosida type penalisation of the obstacle problem
resulting in the following optimal control problem depending on the parameter $\gamma \gg 1$:
\[
(\mathbb{P}^{\gamma}) \quad
 \min_{u \in L^2(\Omega)} J^{\gamma}(u)=\frac{1}{2} \Vert y-y_0 \Vert_{L^2(\Omega)}^2  + \dfrac{\alpha}{2} \Vert u \Vert_{L^2(\Omega)} ^2 
\]
subject to
\begin{equation} \label{equation: 1}
\int_{\Omega} \nabla y \cdot \nabla \phi \, dx + \gamma^3 \int_{\Omega} [(y-\psi)^-]^3 \, \phi \, dx = \int_{\Omega} (f+u) \phi \, dx \qquad \forall \phi \in H^1_0(\Omega).
\end{equation}
Here, $a^-=\min(a,0)$. 
Existence of a solution of $(\mathbb{P}^{\gamma})$ and a detailed convergence analysis as $\gamma \rightarrow \infty$ can be found in \cite[Section 3]{MRW15}.  For numerical purposes  we shall discretise the PDE (\ref{equation: 1})
with the help of continuous, piecewise linear finite elements
giving rise to a discrete optimisation problem, whose solutions satisfy standard first order optimality conditions.
Due to the lack of convexity of the underlying  problem it is however not clear whether
a computed discrete stationary point is actually a global minimum. Our first main result of this paper establishes for a fixed penalisation parameter and a discrete stationary
solution a condition that ensures global optimality, see Theorem \ref{main2}. This condition has the form of an inequality involving the state and the adjoint variable as well as the obstacle. Furthermore, the
minimum is unique in case that the inequality is strict. A similar kind of result
for control problems subject to a class of semilinear elliptic PDEs has been obtained in \cite{Hinze1,Hinze2}, but the form of the condition presented here is adapted to the obstacle problem and entirely different from the one proposed
in \cite{Hinze1,Hinze2}. In Section 3 we consider a sequence of approximate control problems $(\mathbb{P}^{\gamma_h}_h)$ with $h \rightarrow 0$ and $\gamma_h \rightarrow \infty$ as $h \rightarrow 0$, where
$h$ denotes the grid size. As our second main result we shall prove that a corresponding sequence of discrete stationary points which are
uniformly bounded in a suitable sense has a subsequence that converges to a limit satisfying a system of first order optimality conditions, see Theorem \ref{stationarylimit}. It turns out that a solution of  this system is
strongly stationary in the sense of \cite{MP84} and that  we obtain a continuous analogue of the condition mentioned above guaranteeing global optimality of a stationary point, see Theorem \ref{statprop}. The only other sufficient
condition for optimality which we are aware of can be found in \cite[Theorem 5.4]{M76}, where global optimality is derived from the condition that $y_0 \leq \psi$, see also \cite[Section 5.2]{IK00}. A sufficient
second order optimality condition giving local optimality is derived in \cite{KW12b}.
In Section \ref{Sec4} we briefly outline how to apply our theory to a direct discretisation of (\ref{obstacle}).  Finally, several numerical examples with unique global solutions are presented in Section \ref{secn}.

\section{Discretisation of $(\mathbb{P})$ }
Let $\mathcal{T}_h$ be an admissible triangulation of $\Omega$ so that 
\[
\bar\Omega= \bigcup_{T \in \mathcal{T}_h} \bar T. 
\]
We denote by $x_1,\ldots,x_n$ the interior and by $x_{n+1},\ldots,x_{n+m}$ the boundary vertices of $\mathcal T_h$. Next, let
\[
X_{h}:= \{ v_h \in C^0(\bar\Omega) : v_h \mbox{ is a linear polynomial on each }  T \in \mathcal{T}_h    \}
\]
be the space of linear finite elements  as well as $X_{h0}:= X_h \cap H^1_0(\Omega)$.  The standard nodal basis functions are defined by
$\phi_i \in X_h$ with $\phi_i(x_j)=\delta_{ij}, i,j=1,\ldots,n+m$. In particular,  $\lbrace \phi_1,\ldots,\phi_n \rbrace$ is a basis of $X_{h0}$.
We shall make use of  the Lagrange interpolation operator 
\begin{displaymath}
I_h:C^0(\bar{\Omega}) \rightarrow X_h \quad I_h y:= \sum_{j=1}^{n+m} y(x_j) \phi_i. 
\end{displaymath}

\noindent
Let us approximate \eqref{equation: 1} as follows: for a given $u \in L^2(\Omega)$, find $y_h \in  X_{h0}$ such that
\begin{equation}
\label{equation: 3}
 \int_\Omega \nabla y_h   \cdot  \nabla \phi_h \, dx + \gamma^3 \int_{\Omega} I_h \{ [(y_h-\psi)^-]^3 \,  \phi_h \} \, dx = \int_\Omega (f+ u) \phi_h \,dx \quad \forall \, \phi_h \in X_{h0}.
\end{equation}
\noindent
It is not difficult to show that \eqref{equation: 3} has a unique solution $y_h \in X_{h0}$. The variational discretization of Problem~$(\mathbb{P}^{\gamma})$ now reads:
\[
(\mathbb{P}^{\gamma}_h) \quad
\begin{array}{l}
 \min_{u \in L^2(\Omega)} J^{\gamma}_h(u):=\frac{1}{2} \Vert y_h-y_0 \Vert_{L^2(\Omega)}^2  + \dfrac{\alpha}{2} \Vert u \Vert_{L^2(\Omega)} ^2 \\
\mbox{ subject to } y_h \mbox{ solves (\ref{equation: 3}) }. 
\end{array}
\]

\noindent
Using standard arguments one obtains:

\begin{lemma} \label{exitmult}
Suppose that $u_h \in L^2(\Omega)$ is a local solution of $(\mathbb{P}^{\gamma}_h)$ with corresponding state $y_h \in X_{h0}$. Then there exists an adjoint state $p_h \in X_{h0}$ 
such that 
\begin{eqnarray}
\hspace{-5mm} \int_\Omega \nabla y_h   \cdot  \nabla \phi_h  dx + \gamma^3 \int_{\Omega} I_h \{ [(y_h-\psi)^-]^3 \,  \phi_h \}  dx & = & \int_\Omega (f+ u_h) \phi_h \,dx \quad \forall \, \phi_h \in X_{h0} \label{foc1} \\
\hspace{-5mm} \int_{\Omega} \nabla p_h \cdot \nabla \phi_h  dx  + 3 \gamma^3 \int_{\Omega} I_h \{ [(y_h - \psi)^-]^2 p_h \phi_h \}  dx  & = & \int_{\Omega} (y_h - y_0) \phi_h \, dx \quad \forall \, \phi_h \in X_{h0}   \label{foc2} \\
 \alpha u_h + p_h & = & 0 \quad \mbox{ in } \Omega. \label{foc3}
\end{eqnarray}
\end{lemma}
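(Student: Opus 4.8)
The plan is to run the standard adjoint-based argument for deriving first order conditions, the key enabling observation being that, although the continuous control-to-state map for the obstacle problem is not differentiable, its discrete penalised counterpart $u \mapsto y_h$ defined by \eqref{equation: 3} is continuously differentiable. This is because the scalar nonlinearity $g(s):=[s^-]^3$ is $C^1$ with $g'(s)=3[s^-]^2$, and the interpolated penalty term $\int_\Omega I_h\{g(y_h-\psi)\phi_h\}\,dx$ depends smoothly on the nodal values of $y_h$.

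First I would establish that \eqref{equation: 3} defines a map $S:L^2(\Omega)\to X_{h0}$, $S(u)=y_h$, which is of class $C^1$. Existence and uniqueness of $y_h$ follow from strict monotonicity of the underlying operator (the principal part is coercive and the penalty term is monotone). For differentiability I would view \eqref{equation: 3} as a finite-dimensional system in the nodal coefficients of $y_h$, which is $C^1$ in those coefficients and affine in the finitely many moments $\int_\Omega u\phi_i\,dx$ of the control; consequently the $L^2$-Fréchet differentiability of $S$ is inherited from this finite-dimensional smoothness. The Jacobian corresponds to the bilinear form
\[
a_{y_h}(z_h,\phi_h):=\int_\Omega \nabla z_h\cdot\nabla\phi_h\,dx + 3\gamma^3\int_\Omega I_h\{[(y_h-\psi)^-]^2 z_h\phi_h\}\,dx,
\]
which is coercive on $X_{h0}$ because the Laplacian part is coercive and the reaction coefficient $3\gamma^3[(y_h-\psi)^-]^2$ is nonnegative; hence the Jacobian is invertible and the implicit function theorem yields the $C^1$ dependence. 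The directional derivative $z_h:=S'(u)v$ is then the unique solution in $X_{h0}$ of the linearised equation $a_{y_h}(z_h,\phi_h)=\int_\Omega v\phi_h\,dx$ for all $\phi_h\in X_{h0}$.

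Next I would introduce the reduced functional $\hat J(u):=\frac12\|S(u)-y_0\|_{L^2(\Omega)}^2+\frac{\alpha}{2}\|u\|_{L^2(\Omega)}^2$ and compute, by the chain rule, $\hat J'(u)v=\int_\Omega (y_h-y_0)z_h\,dx+\alpha\int_\Omega u\,v\,dx$. To remove the implicitly defined $z_h$, I would define the adjoint state $p_h\in X_{h0}$ as the unique solution of \eqref{foc2}, i.e. of $a_{y_h}(p_h,\phi_h)=\int_\Omega(y_h-y_0)\phi_h\,dx$ for all $\phi_h\in X_{h0}$ — again well posed by coercivity of $a_{y_h}$, and crucially $a_{y_h}$ is symmetric, so the adjoint bilinear form coincides with the linearised one. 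Testing the linearised equation with $\phi_h=p_h$ and the adjoint equation with $\phi_h=z_h$ and subtracting gives $\int_\Omega (y_h-y_0)z_h\,dx=\int_\Omega v\,p_h\,dx$, whence $\hat J'(u)v=\int_\Omega(p_h+\alpha u)v\,dx$.

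Finally, since $u_h$ is a local solution, $\hat J'(u_h)v=0$ for every $v\in L^2(\Omega)$, which forces $\alpha u_h+p_h=0$ a.e. in $\Omega$, establishing \eqref{foc3}; equation \eqref{foc1} is just the state equation and \eqref{foc2} the defining adjoint equation. The only genuinely delicate point is the $C^1$ regularity of $S$: it rests entirely on the fact that the penalty exponent is chosen so that $s\mapsto[s^-]^3$ is continuously differentiable, which is precisely why the cubic penalty (rather than a quadratic one) is used; everything else reduces to routine coercivity and duality.
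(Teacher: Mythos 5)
Your proof is correct and is precisely the ``standard arguments'' the paper invokes without writing them out: $C^1$ differentiability of the discrete penalised solution map (resting on the $C^1$ nonlinearity $s\mapsto [s^-]^3$ and the invertibility of the coercive linearised form $a_{y_h}$), the symmetric adjoint construction, and vanishing of the reduced gradient at a local minimum. No gaps; the treatment of Fr\'echet differentiability via the finitely many moments $\int_\Omega u\phi_i\,dx$ is a clean way to make the finite-dimensional reduction rigorous.
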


\noindent
Note that (\ref{foc3}) implicitly yields a discretisation of the control variable so that (\ref{foc1})--(\ref{foc3}) is a finite--dimensional system that
can be solved using classical nonlinear programming algorithms. However, due to the non-convexity of the problem, it is not clear whether a solution of (\ref{foc1})--(\ref{foc3})
is actually a global minimum of $(\mathbb{P}^{\gamma}_h)$. Our first main result provides a sufficient condition for a discrete stationary point that guarantees that this is the case.
In order to formulate the corresponding condition  we introduce $\lambda_1$ as the smallest eigenvalue of $-\Delta$ in $\Omega$ subject to homogeneous Dirichlet boundary conditions, i.e.
\begin{equation}  \label{lambda1}
\displaystyle \lambda_1 = \inf_{\phi \in H^1_0(\Omega) \setminus \lbrace 0 \rbrace} \frac{\int_{\Omega} | \nabla \phi |^2 \, dx}{\int_{\Omega} \phi^2 \, dx}.
\end{equation}
\noindent
In what follows we shall abbreviate $y_j=y_h(x_j), p_j=p_h(x_j)$ and $\psi_j=\psi(x_j),\, j=1,\ldots,n$.

\begin{theorem} \label{main2}
Suppose that $(u_h,y_h,p_h)$ is a solution of (\ref{foc1})--(\ref{foc3}), which satisfies
\begin{equation} \label{foc4}
p_k \geq 0 \quad \forall k \in \lbrace 1,\ldots,n \rbrace \mbox{ with } y_k - \psi_k  =  0 
\end{equation}
and define
\begin{equation}  \label{defeta}
\eta:= \min \bigl(\min_{y_j > \psi_j} \frac{p_j}{y_j - \psi_j},\min_{y_j < \psi_j} \frac{3 p_j}{\psi_j-y_j},0 \bigr).
\end{equation}
If 
\begin{equation} \label{optcondgammah}
| \eta | \leq \alpha \lambda_1 + \sqrt{ \alpha^2 \lambda_1^2 + \alpha},
\end{equation}
then $ u_h$ is a global minimum for Problem~$(\mathbb{P}^{\gamma}_h)$. If the inequality (\ref{optcondgammah}) is strict, then
$ u_h$ is the unique global minimum. 
\end{theorem}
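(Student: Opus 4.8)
The plan is to take an arbitrary competitor $v \in L^2(\Omega)$, denote by $z_h \in X_{h0}$ its state (the solution of \eqref{equation: 3} with $u$ replaced by $v$), and show directly that $J^{\gamma}_h(v) \ge J^{\gamma}_h(u_h)$. Writing $a_j := y_j - \psi_j$, $b_j := z_h(x_j) - \psi_j$ and $m_j := \int_\Omega \phi_j\,dx > 0$, and using the quadrature identity $\int_\Omega I_h\{F\}\,dx = \sum_{j=1}^n F(x_j)m_j$, I would first expand the two squared norms in $J^{\gamma}_h$ about $u_h$ and $y_h$ and then eliminate the cross terms $\int_\Omega (y_h-y_0)(z_h-y_h)$ and $\alpha\int_\Omega u_h(v-u_h)$ by testing \eqref{foc2} with $z_h - y_h$, the difference of the state equations with $p_h$, and invoking \eqref{foc3}. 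This yields the exact identity
\begin{equation*}
J^{\gamma}_h(v) - J^{\gamma}_h(u_h) = \tfrac12\|z_h - y_h\|_{L^2}^2 + \tfrac{\alpha}{2}\|v - u_h\|_{L^2}^2 - \gamma^3\sum_{j=1}^n m_j R_j\, p_j,
\end{equation*}
where $R_j := G(b_j) - G(a_j) - G'(a_j)(b_j - a_j)$ is the second-order Taylor remainder of the cubic $G(s) := [s^-]^3$, with $G'(s) = 3[s^-]^2$. Since $G$ is concave, each $R_j \le 0$, so the last sum carries the only term that can be negative.

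The heart of the argument --- and the step I expect to be the main obstacle --- is a nodewise estimate that controls this remainder by the monotone penalty energy. Concretely, I would prove that for every $j$
\begin{equation*}
-\gamma^3 R_j\, p_j \ \ge\ \eta\,\gamma^3\bigl(G(b_j) - G(a_j)\bigr)(b_j - a_j).
\end{equation*}
For $p_j \ge 0$ this is immediate, since the left-hand side is nonnegative ($R_j\le0$) while $\eta\le0$ renders the right-hand side nonpositive ($G$ nondecreasing gives $(G(b_j)-G(a_j))(b_j-a_j)\ge0$). For $p_j < 0$, condition \eqref{foc4} forces $a_j \ne 0$, so $j$ enters one of the two minima defining $\eta$ in \eqref{defeta}; this yields $|p_j| \le |\eta|\,a_j$ when $a_j>0$ and $|p_j|\le \tfrac13|\eta|\,|a_j|$ when $a_j<0$. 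The verification then reduces to an elementary case distinction on the signs of $a_j,b_j$, using the factorisations $R_j = b_j^3$ (for $a_j>0,\,b_j<0$), $R_j = a_j^2(2a_j-3b_j)$ (for $a_j<0,\,b_j\ge0$) and $R_j = (b_j-a_j)^2(b_j+2a_j)$ (for $a_j,b_j<0$); in each case the inequality collapses to a manifestly true polynomial bound, e.g.\ $3b_j^2 + 2a_jb_j + a_j^2 \ge 0$. The essential point is that $\gamma^3$ appears on both sides and cancels, so no smallness of $\gamma$ is required.

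Summing against the positive weights $m_j$ and setting $M := \gamma^3\sum_j m_j(G(b_j)-G(a_j))(b_j-a_j) \ge 0$ gives $-\gamma^3\sum_j m_j R_j p_j \ge \eta M$. Testing the difference of the state equations with $z_h - y_h$ produces the energy identity $\|\nabla(z_h-y_h)\|_{L^2}^2 + M = \int_\Omega (v-u_h)(z_h-y_h)\,dx$, which I would use to express $M$. Employing $\eta\le0$ together with the Poincaré bound $\|\nabla(z_h-y_h)\|_{L^2}^2 \ge \lambda_1\|z_h-y_h\|_{L^2}^2$ from \eqref{lambda1} and Cauchy--Schwarz, everything reduces to nonnegativity of the quadratic form
\begin{equation*}
\bigl(\tfrac12 + |\eta|\lambda_1\bigr)s^2 - |\eta|\,rs + \tfrac{\alpha}{2} r^2, \qquad s := \|z_h-y_h\|_{L^2},\ r:=\|v-u_h\|_{L^2}.
\end{equation*}
This form is positive semidefinite precisely when $|\eta|^2 - 2\alpha\lambda_1|\eta| - \alpha \le 0$, i.e.\ exactly when \eqref{optcondgammah} holds, which proves global optimality of $u_h$. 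If \eqref{optcondgammah} is strict the form is positive definite, so $J^{\gamma}_h(v) > J^{\gamma}_h(u_h)$ unless $s = r = 0$; then $\|v-u_h\|_{L^2}=0$ forces $v = u_h$, yielding uniqueness.
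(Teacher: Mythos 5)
Your proposal is correct and follows essentially the same route as the paper's proof: the same expansion of $J^{\gamma}_h(v)-J^{\gamma}_h(u_h)$, the same elimination of the cross terms via the adjoint and state equations leading to the identity with the penalty remainder (the paper writes $r_j=-R_j$), the identical key nodewise estimate $-\gamma^3 R_j\,p_j \ \ge\ \eta\,\gamma^3\bigl(G(b_j)-G(a_j)\bigr)(b_j-a_j)$, and the same energy-identity/Poincar\'e/quadratic-form conclusion with the threshold $\alpha\lambda_1+\sqrt{\alpha^2\lambda_1^2+\alpha}$. The only difference is cosmetic: where you verify the nodewise bound through sign cases and explicit factorisations of the Taylor remainder of $G(s)=[s^-]^3$, the paper performs the same case split over $\mathcal N^+$, $\mathcal N^0$, $\mathcal N^-$ but closes the case $y_j<\psi_j$ with the Young-type inequality $\tfrac{4}{3}|a|^3|b|\le a^4+\tfrac{1}{3}b^4$ instead of your polynomial identity $(a+b)^2+2b^2\ge 0$.
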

\begin{proof}  Let $v \in L^2(\Omega)$ be arbitrary and denote by $\tilde{y}_h \in X_{h0}$ the solution of 
\begin{equation}
\label{equation: 4}
\int_\Omega \nabla \tilde y_h   \cdot  \nabla \phi_h  dx + \gamma^3 \int_{\Omega} I_h \{ [(\tilde y_h-\psi)^-]^3 \,  \phi_h \}  dx  =  \int_\Omega (f+ v) \phi_h \,dx \quad \forall \, \phi_h \in X_{h0}.
\end{equation}
A straightforward calculation shows that
\begin{eqnarray}
J^{\gamma}_h(v) - J^{\gamma}_h(u_h) & = & \frac{1}{2} \Vert \tilde y_h - y_h \Vert_{L^2(\Omega)}^2 + \frac{\alpha}{2} \Vert v - u_h \Vert_{L^2(\Omega)}^2 \nonumber  \\
& & + \int_{\Omega} (y_h - y_0)( \tilde y_h - y_h) \, dx  + \alpha \int_{\Omega} u_h (v-u_h) \, dx. \label{jghdif1}
\end{eqnarray}
We deduce from (\ref{foc2}), (\ref{foc1}) and (\ref{equation: 4}) that
\begin{eqnarray}
\lefteqn{ \hspace{-5mm}
\int_{\Omega} (y_h - y_0)( \tilde y_h - y_h) \, dx = \int_{\Omega} \nabla p_h \cdot \nabla ( \tilde y_h - y_h ) \, dx + 3 \gamma^3 \int_{\Omega} I_h \{ [(y_h - \psi)^-]^2 p_h (\tilde y_h - y_h) \} dx } \nonumber \\
& = & - \gamma^3 \int_{\Omega} I_h \{ [ (\tilde y_h - \psi)^-]^3 p_h \} dx + \gamma^3 \int_{\Omega} I_h \{ [ (y_h - \psi)^-]^3 p_h \} dx + \int_{\Omega} p_h (v-u_h)  \, dx \nonumber  \\
& & + 3 \gamma^3 \int_{\Omega} I_h \{ [(y_h - \psi)^-]^2 p_h (\tilde y_h - y_h) \} dx  \nonumber \\
& = & \gamma^3 \int_{\Omega} I_h \{ p_h r_h \} \, dx + \int_{\Omega} p_h (v-u_h)  \, dx, \label{rterm}
\end{eqnarray}
where
\begin{eqnarray*}
r_h  & = & - [(\tilde y_h - \psi)^-]^3 + [(y_h - \psi)^-]^3 + 3 [(y_h - \psi)^-]^2  (\tilde y_h - y_h) \\
& = &  -  [(\tilde y_h - \psi)^-]^3 -2   [(y_h - \psi)^-]^3 + 3  [ (y_h - \psi)^-]^2   (\tilde y_h - \psi).
\end{eqnarray*}
\noindent
Using Young's inequality we find that
\begin{eqnarray*}
[ (y_h - \psi)^-]^2 (\tilde y_h - \psi) & = & [ (y_h - \psi)^-]^2 \bigl( (\tilde y_h - \psi)^- + (\tilde y_h - \psi )^+ \bigr) \\
& \geq & [ (y_h - \psi)^-]^2 (\tilde y_h - \psi)^- \geq \frac{2}{3} [(y_h -\psi)^-]^3 + \frac{1}{3} [(\tilde y_h-\psi)^-]^3
\end{eqnarray*}
so that
\begin{equation}  \label{rjge0}
r_h \geq 0 \quad \mbox{ in } \bar \Omega.
\end{equation}
\noindent
Inserting (\ref{rterm}) into (\ref{jghdif1}) and applying (\ref{foc3}) we obtain
\begin{equation} \label{jgdif2}
J^{\gamma}_h(v) - J^{\gamma}_h(u_h)  =   \frac{1}{2} \Vert \tilde y_h - y_h \Vert_{L^2(\Omega)}^2  + \frac{\alpha}{2} \Vert v - u_h \Vert_{L^2(\Omega)}^2   + \gamma^3 \sum_{j=1}^n p_j r_j m_j,
\end{equation}
where we have abbreviated $r_j:= r_h(x_j) \geq 0$ and  $m_j=\int_{\Omega} \phi_j \, dx$. \\
\noindent
Let us decompose
\begin{displaymath}
\lbrace 1,\ldots,n \rbrace = \lbrace j \, | \, y_j > \psi_j \rbrace \cup \lbrace j \, | \, y_j = \psi_j \rbrace \cup \lbrace j \, | \, y_j < \psi_j \rbrace =: \mathcal N^+ \cup \mathcal N^0 \cup \mathcal N^-.
\end{displaymath}
(i) $j \in \mathcal N^+$: In this case  we have in view of (\ref{defeta}) that $p_j \geq \eta (y_j - \psi_j)$ and hence 
\begin{eqnarray}
 p_j r_j  & \geq &  \eta  (y_j - \psi_j) r_j  = -  \eta (y_j - \psi_j) [(\tilde{y}_j-\psi_j)^-]^3   \nonumber  \\
 & = & \eta (\tilde y_j - y_j) [(\tilde{y}_j-\psi_j)^-]^3 - \eta [(\tilde{y}_j-\psi_j)^-]^4 \label{case1}  \\
 & \geq & \eta (\tilde{y}_j - y_j) \{ [(\tilde{y_j}-\psi_j)^-]^3 - [(y_j-\psi_j)^-]^3 \} \nonumber
\end{eqnarray}
since $(y_j - \psi_j)^{-} =0, j \in \mathcal N^+$ and $\eta \leq 0$. \\
(ii) $j \in \mathcal N^-$: In this case  we have $p_j \geq -\frac{\eta}{3}(y_j - \psi_j)$ so that
\begin{eqnarray}
p_j r_j &  \geq & - \frac{\eta}{3}  (y_j - \psi_j) r_j \\
&=& \frac{\eta}{3}  (y_j - \psi_j) [(\tilde{y}_j-\psi_j)^-]^3 + \frac{2}{3} \eta [(y_j-\psi_j)^-]^4  - \eta  [(y_j-\psi_j)^-]^3 (\tilde{y}_j - \psi_j) \nonumber  \\
& = & \eta (\tilde{y}_j - y_j) \{ [(\tilde{y}_j-\psi_j)^-]^3 - [(y_j-\psi_j)^-]^3 \} m_j \nonumber \\
& & - \eta \{   [(\tilde{y}_j-\psi_j)^-]^4 + \frac{1}{3} [(y_j-\psi_j)^-]^4 - \frac{4}{3} (y_j - \psi_j)   [(\tilde{y}_j-\psi_j)^-]^3 \} \nonumber \\
& \geq & \eta (\tilde{y}_j - y_j) \{ [(\tilde{y}_j-\psi_j)^-]^3 - [(y_j-\psi_j)^-]^3 \}, \label{case2}
\end{eqnarray}
since $\eta \leq 0$ and  $\frac{4}{3} |a|^3 |b|  \leq  a^4 + \frac{1}{3} b^4$ in view of Young's inequality. \\
(iii) $j \in \mathcal N^0$: In this case we have $p_j \geq 0$ by (\ref{foc4}) and therefore
\begin{equation} \label{case3}
p_j r_j \geq 0 \geq \eta (\tilde{y}_j - y_j) \{ [(\tilde{y}_j-\psi_j)^-]^3 - [(y_j-\psi_j)^-]^3 \}.
\end{equation}

\noindent
Combining (\ref{case1})--(\ref{case3}) with (\ref{equation: 3}), (\ref{equation: 4}) and the definition of $\lambda_1$ we derive
\begin{eqnarray*}
\gamma^3 \sum_{j=1}^n p_j r_j m_j  & \geq  &  \eta \gamma^3 \sum_{j=1}^n (\tilde{y}_j - y_j) \{ [(\tilde{y}_j-\psi_j)^-]^3 - [(y_j-\psi_j)^-]^3 \} m_j \\
 & = &  \eta \gamma^3 \int_{\Omega} I_h \{ ([( \tilde y_h - \phi)^-]^3 - [(y_h - \psi)^-]^3) (\tilde y_h - y_h) \} dx \\
& = & -\eta  \int_{\Omega} | \nabla ( \tilde y_h - y_h) |^2 \, dx  + \eta \int_{\Omega} (v-u_h) (\tilde y_h - y_h)  \, dx   \\
& \geq & | \eta | \lambda_1 \int_{\Omega} | \tilde{y}_h - y_h |^2 \, dx - | \eta | \int_{\Omega} (v-u_h) (\tilde{y}_h - y_h ) \, dx.
\end{eqnarray*}
If we insert this bound into (\ref{jgdif2}) we deduce that
\begin{displaymath}
 J^{\gamma}_h(v) - J^{\gamma}_h(u_h) \geq \int_{\Omega} [ (\frac{1}{2} + | \eta | \lambda_1) | \tilde{y}_h - y_h |^2 + \frac{\alpha}{2} | v-u_h|^2 
 - | \eta | (v-u_h) (\tilde{y}_h - y_h) ] \, dx.
\end{displaymath}
It is not difficult to verify that the bilinear form $(x_1,x_2) \mapsto (\frac{1}{2} + | \eta | \lambda_1) x_1^2 + \frac{\alpha}{2} x_2^2 
 - | \eta | x_1 x_2$ is positive semidefinite (positive definite) if 
\begin{displaymath}
\frac{\alpha}{4} + \frac{\alpha}{2} \lambda_1 | \eta | - \frac{| \eta |^2}{4} \geq 0 \quad  (>0).
\end{displaymath}
This is the case, if $| \eta | \leq \mu$ ($ | \eta | < \mu$), where $\mu=\alpha \lambda_1 + \sqrt{ \alpha^2 \lambda_1^2 + \alpha}$ is the positive
 root of the quadratic equation $x^2 -2 \alpha \lambda_1 x - \alpha=0$. This completes the proof of the theorem. 
 \end{proof}
 
\section{Convergence}

Let $(\mathcal T_h)_{0 < h \leq h_0}$ be a sequence of triangulations of $\bar{\Omega}$ with mesh size $h:=\max_{T \in \mathcal T_h} h_T$, where $h_T= \mbox{diam}(T)$. We suppose that the
sequence is regular in the sense that there exists $\rho>0$ such that
\begin{displaymath}
r_T \geq \rho h_T \quad \forall T \in \mathcal T_h, \; 0< h \leq h_0,
\end{displaymath}
where $r_T$ denotes the radius of the largest ball contained in $T$. For a sequence  $(\gamma_h)_{0 < h \leq h_0}$  satisfying
\begin{equation} \label{gammah}
\gamma_h \rightarrow \infty \quad \mbox{ as } \quad  h \rightarrow 0
\end{equation}
we now consider the corresponding sequence of control problems $(\mathbb{P}^{\gamma_h}_h)$. Our first result is concerned with the question how the stationarity conditions at the
discrete level transfer to the continuous level under the assumption that the quantity defined in (\ref{defeta}) is uniformly bounded.

\begin{theorem}  \label{stationarylimit}
Let $(\bar u_h,\bar y_h,\bar p_h)_{0 < h \leq h_0} \subset L^2(\Omega) \times X_{h0} \times X_{h0}$ be a sequence of solutions of (\ref{foc1})--(\ref{foc3}) with 
corresponding $\eta_h \leq 0$ given by (\ref{defeta}) and suppose that 
\begin{equation} \label{upbound}
\Vert \bar u_h \Vert_{L^2(\Omega)} \leq C, \quad | \eta_h | \leq C, \qquad 0< h \leq h_0
\end{equation}
for some $C \geq 0$. Then there exists a subsequence $h \rightarrow 0$ and $(\bar u,\bar y,\bar p) \in L^2(\Omega) \times H^1_0(\Omega) \times H^1_0(\Omega)$ as well
as $\bar \xi, \bar \mu \in H^{-1}(\Omega)$ such that
\begin{displaymath}
\bar u_h \rightarrow \bar u \mbox{ in } L^2(\Omega), \quad \bar y_h \rightarrow \bar y \mbox{ in } H^1(\Omega), \quad \bar p_h \rightharpoonup  \bar p \mbox{ in } H^1(\Omega), \quad \eta_h \rightarrow \eta
\end{displaymath}
and
\begin{eqnarray}
\int_{\Omega} \nabla \bar y \cdot \nabla \phi \, dx =  \int_{\Omega} (f+ \bar u) \phi \, dx  + \langle \bar \xi, \phi \rangle\quad \forall \phi \in H^1_0(\Omega)  &&  \label{com1c} \\
\bar y   \geq \psi \mbox{ a.e. in } \Omega, \; \bar  \xi \geq 0, \;  \langle \bar \xi, \bar y-\psi \rangle  =  0 &&  \label{com2c} \\
\int_{\Omega} \nabla \bar p \cdot \nabla \phi \, dx  = \int_{\Omega} (\bar y-y_0) \phi \, dx   - \langle \bar \mu, \phi \rangle \quad \forall \phi \in H^1_0(\Omega) &&  \label{com3c} \\
\langle \bar \xi, \bar p \rangle =0, \, \langle \bar \mu, \bar y-\psi \rangle =0 &&  \label{com4c} \\
\alpha \bar u+ \bar  p =0 \quad \mbox{ a.e. in } \Omega \label{com5c} \\
\bar p \geq \eta (\bar y-\psi) \mbox{ a.e. in } \Omega, \; \bar \mu \geq \eta \, \bar \xi. \label{com6c} 
\end{eqnarray}
\end{theorem}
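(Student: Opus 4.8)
The plan is to extract convergent subsequences from the uniformly bounded discrete data and then pass to the limit in each of the discrete relations \eqref{foc1}--\eqref{foc3}, identifying the limiting multipliers $\bar\xi$ and $\bar\mu$ along the way. First I would establish compactness: from \eqref{upbound} and \eqref{foc3} one gets $\Vert \bar p_h\Vert_{L^2}=\alpha\Vert \bar u_h\Vert_{L^2}\leq C$, and testing \eqref{foc1} with $\phi_h=\bar y_h$ yields a uniform $H^1$ bound on $\bar y_h$ together with a uniform bound on the penalty term $\gamma_h^3\int_\Omega I_h\{[(\bar y_h-\psi)^-]^3\bar y_h\}\,dx$. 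Testing \eqref{foc2} with $\phi_h=\bar p_h$, using that the interpolated penalty coefficient $3\gamma_h^3 I_h\{[(\bar y_h-\psi)^-]^2\}$ is nonnegative, gives a uniform $H^1$ bound on $\bar p_h$. Hence, after passing to a subsequence, $\bar u_h\rightharpoonup\bar u$ in $L^2$, $\bar y_h\rightharpoonup\bar y$ and $\bar p_h\rightharpoonup\bar p$ in $H^1_0(\Omega)$ (with $\bar y_h\to\bar y$ strongly in $L^2$ by Rellich), and $\eta_h\to\eta$ since $|\eta_h|\leq C$. The relation \eqref{com5c} follows immediately from \eqref{foc3} by passing to the $L^2$-limit, which also upgrades $\bar u_h\to\bar u$ to strong convergence once $\bar p_h\to\bar p$ strongly in $L^2$ (again via Rellich from the $H^1$ bound).

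Next I would define the discrete multipliers as the residuals. Set $\langle\bar\xi_h,\phi\rangle := \int_\Omega(f+\bar u_h)\phi\,dx-\int_\Omega\nabla\bar y_h\cdot\nabla\phi\,dx$, which by \eqref{foc1} equals $\gamma_h^3\int_\Omega I_h\{[(\bar y_h-\psi)^-]^3\phi\}\,dx$ for $\phi\in X_{h0}$; similarly $\bar\mu_h$ is read off from \eqref{foc2}. The uniform $H^1$ bounds make $\bar\xi_h,\bar\mu_h$ bounded in $H^{-1}(\Omega)$, so they converge weakly-$*$ to limits $\bar\xi,\bar\mu\in H^{-1}(\Omega)$, and passing to the limit in the residual definitions gives \eqref{com1c} and \eqref{com3c}. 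To obtain the sign and feasibility conditions in \eqref{com2c} I would argue that the uniform bound on $\gamma_h^3\int_\Omega I_h\{[(\bar y_h-\psi)^-]^3\}\,dx$ forces $[(\bar y_h-\psi)^-]\to 0$ in an appropriate norm, whence $\bar y\geq\psi$ a.e.; nonnegativity $\bar\xi\geq0$ follows because the penalty density $\gamma_h^3[(\bar y_h-\psi)^-]^3$ is nonpositive in sign convention but tested against $\phi\geq0$ produces the correct sign, and the complementarity $\langle\bar\xi,\bar y-\psi\rangle=0$ comes from the fact that the penalty density vanishes wherever $\bar y_h>\psi$.

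For the coupling conditions \eqref{com4c} and \eqref{com6c}, I would exploit the pointwise nodal inequalities already present in the discrete setting. The definition \eqref{defeta} of $\eta_h$ encodes exactly the nodal bounds $\bar p_j\geq\eta_h(\bar y_j-\psi_j)$ on $\mathcal N^+$ and $\bar p_j\geq-\tfrac{\eta_h}{3}(\bar y_j-\psi_j)$ on $\mathcal N^-$, together with $\bar p_j\geq0$ on $\mathcal N^0$ via \eqref{foc4}; interpolating these and passing to the limit yields $\bar p\geq\eta(\bar y-\psi)$ a.e., the first half of \eqref{com6c}. The relations $\langle\bar\xi,\bar p\rangle=0$ and $\bar\mu\geq\eta\,\bar\xi$ require care: I would test the discrete penalty functionals against $\bar p_h$ and use the nodal sign structure, noting that where the penalty is active $\bar y_h<\psi$ and there the control of $\bar p_h$ via $\eta_h$ gives the needed comparison between $\bar\mu_h$ and $\eta_h\bar\xi_h$, which survives in the weak-$*$ limit. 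I expect the main obstacle to be precisely the passage to the limit in these products of weakly convergent penalty multipliers with the adjoint state, since weak convergence alone does not commute with products; the resolution should hinge on upgrading $\bar y_h\to\bar y$ to strong $H^1$-convergence (as asserted in the theorem) so that $[(\bar y_h-\psi)^-]^2$ converges strongly enough to pair against the weakly convergent $\bar p_h$, and on using the nonnegativity of $r_h$ from \eqref{rjge0} together with the sign information in \eqref{com6c} to control the complementarity terms.
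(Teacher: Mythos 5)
Your compactness framework, the definition of $\bar\xi,\bar\mu$ as residuals of the discrete equations, and the limit passage for \eqref{com5c} and the first half of \eqref{com6c} all match the paper's structure. But there is a genuine gap at the heart of the argument, and you in fact name it yourself without resolving it: you invoke ``strong $H^1$-convergence (as asserted in the theorem)'' to handle the products of weakly convergent multipliers with $\bar p_h$ and $\bar y_h$. That is circular --- the strong convergence $\bar y_h \to \bar y$ in $H^1(\Omega)$ is part of the \emph{conclusion} of the theorem, and nothing in your proposal proves it. Weak compactness alone cannot deliver it, nor can it deliver the decay of the penalty energy, and both are indispensable: the paper proves \eqref{com4c} by the Cauchy--Schwarz bound
\begin{displaymath}
\Bigl| 3\gamma_h^3 \int_{\Omega} I_h \{ [(\bar y_h - \psi)^-]^3 \bar p_h \} \, dx \Bigr|
\leq 3 \Bigl( \gamma_h^3 \int_{\Omega} I_h \{ [(\bar y_h - \psi)^-]^4 \} \, dx \Bigr)^{\frac{1}{2}}
\Bigl( \gamma_h^3 \int_{\Omega} I_h \{ [(\bar y_h - \psi)^-]^2 \bar p_h^2 \} \, dx \Bigr)^{\frac{1}{2}},
\end{displaymath}
which tends to zero only because the \emph{quartic penalty energy tends to zero}, not merely stays bounded. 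The paper's mechanism for this, entirely absent from your proposal, is variational: $\bar y_h$ minimizes the convex discrete functional $Q$ in \eqref{innermin}; comparing $Q(\bar y_h) \leq Q(I_h y_k)$ for interpolants of $H^2$-regular approximations $y_k \in K \cap H^2(\Omega)$ (density as in Proposition 5.2 of the Bartels reference) identifies $\bar y$ as the solution of the obstacle problem \eqref{obstacle} --- which is how \eqref{com2c} is actually obtained --- and then taking $y_k \to \bar y$ itself yields the limsup energy inequality \eqref{yconv3}, which combined with weak lower semicontinuity gives norm convergence of $\nabla \bar y_h$, hence strong $H^1$ convergence \emph{and} $\gamma_h^3 \int_{\Omega} I_h \{ [(\bar y_h- \psi)^-]^4 \} \, dx \to 0$ simultaneously. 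Your alternative sketches for \eqref{com2c} and \eqref{com4c} (``the penalty density vanishes wherever $\bar y_h>\psi$'', ``the nodal sign structure survives in the weak-$*$ limit'') do not substitute for this: the discrete complementarity defect is exactly $-\gamma_h^3 \int_{\Omega} I_h\{[(\bar y_h-\psi)^-]^4\}\,dx$, so without its decay you cannot conclude $\langle \bar\xi, \bar y - \psi\rangle = 0$ or $\langle \bar\mu, \bar y - \psi \rangle = 0$.

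A secondary, fixable error: you test \eqref{foc1} with $\phi_h = \bar y_h$, claiming a uniform bound on the penalty term. The nodal values $[(\bar y_h-\psi)^-]^3\,\bar y_h$ have no sign (the factor $\bar y_h$ can be negative where the obstacle is violated), so this test function yields neither a clean $H^1$ bound nor a signed penalty contribution. The paper tests with $\phi_h = \bar y_h - I_h\psi$ precisely so that the penalty term becomes the nonnegative quantity $\gamma_h^3 \int_{\Omega} I_h\{[(\bar y_h-\psi)^-]^4\}\,dx$, giving both the $H^1$ bound \eqref{estyh} and the bound on the quartic penalty energy that the rest of the proof (including the convexity argument showing $\bar y \geq \psi$ and the Cauchy--Schwarz step above) relies on.
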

\begin{proof} Let us first derive on upper bound on $\bar y_h$ in $H^1(\Omega)$. Inserting $\phi_h= \bar y_h - I_h \psi$ into (\ref{equation: 3}) we derive
\begin{displaymath}
\int_{\Omega} \nabla \bar y_h \cdot \nabla (\bar y_h - I_h \psi) \, dx + \gamma_h^3 \int_{\Omega} I_h \{ [(\bar y_h- \psi)^-]^4 \} \, dx = \int_{\Omega} (f+\bar u_h) (\bar y_h - I_h \psi) \, dx,
\end{displaymath}
from which we infer with the help of Poincar\'e's inequality
\begin{equation}  \label{estyh}
\Vert \bar y_h \Vert^2_{H^1(\Omega)} + \gamma_h^3 \int_{\Omega} I_h \{ [(\bar y_h- \psi)^-]^4 \} \, dx \leq c \bigl( \Vert f \Vert^2_{L^2(\Omega)} + \Vert I_h \psi \Vert^2_{H^1(\Omega)} + \Vert \bar u_h \Vert^2_{L^2(\Omega)} \bigr) \leq c
\end{equation}
in view of (\ref{upbound}) and the fact that $\psi \in H^2(\Omega)$.
Next, using $\phi_h=p_h$ in (\ref{foc2}) we derive
\begin{displaymath}
\int_{\Omega} | \nabla \bar p_h |^2 \, dx + 3 \gamma_h^3 \int_{\Omega} I_h \{ [(\bar y_h - \psi)^-]^2 \bar p_h^2 \} \, dx = \int_{\Omega} (\bar y_h -y_0) \bar p_h \, dx,
\end{displaymath}
which combined with (\ref{estyh}) yields
\begin{equation} \label{phapriori1}
\Vert \bar p_h \Vert^2_{H^1(\Omega)} + \gamma_h^3 \int_{\Omega} I_h \{ [(\bar y_h - \psi)^-]^2 \bar p_h^2 \} \, dx  \leq c.
\end{equation}
Thus, there exists a subsequence $h \rightarrow 0$ and $\bar u \in L^2(\Omega)$, $\bar y \in H^1_0(\Omega)$,
$\bar p \in H^1_0(\Omega)$ as well as $\eta \leq 0$ such that
\begin{eqnarray}
\bar y_h & \rightharpoonup & \bar y \quad \mbox{ in } H^1_0(\Omega), \quad  \bar y_h \rightarrow \bar y \quad \mbox{ in } L^2(\Omega),  \label{yconv1} \\
\bar p_h & \rightharpoonup & \bar p \quad \mbox{ in } H^1_0(\Omega), \quad \bar p_h \rightarrow \bar p \quad \mbox{ in } L^2(\Omega), \label{pconv1} \\
\bar u_h & \rightarrow & \bar u \quad \mbox{ in } L^2(\Omega), \label{uconv1} \\
\eta_h & \rightarrow & \eta.  \label{etaconv1}
\end{eqnarray}
Note that the strong convergence in (\ref{uconv1}) is a consequence of (\ref{pconv1}) and (\ref{foc3}). 
Let us first verify that $\bar y \in K$. Using the convexity of $s \mapsto (s^-)^4$ we derive that
\begin{displaymath}
[(\bar y_h - I_h \psi)^-]^4 \leq I_h \{ [ (\bar y_h - \psi)^-]^4 \} \quad \mbox{ on } T\, \mbox{ for all } T \in \mathcal T_h,
\end{displaymath}
which together with (\ref{estyh}) yields
\begin{displaymath}
\Vert (\bar y_h - I_h \psi)^- \Vert_{L^1(\Omega)} \leq c \Vert (\bar y_h - I_h \psi)^- \Vert_{L^4(\Omega)} \leq c \gamma_h^{-\frac{3}{4}} \rightarrow 0, \; h \rightarrow 0.
\end{displaymath}
We then have
\begin{displaymath}
\Vert  (\bar y - \psi)^- \Vert_{L^1(\Omega)}  \leq \Vert (\bar y_h - I_h \psi)^- \Vert_{L^1(\Omega)} + \Vert \bar y_h - \bar y \Vert_{L^1(\Omega)} + \Vert I_h \psi - \psi \Vert_{L^1(\Omega)}
\end{displaymath}
so that we deduce that $\Vert  (\bar y - \psi)^- \Vert_{L^1(\Omega)}=0$ by sending $h \rightarrow 0$. Thus $\bar y \geq \psi$ a.e. in $\Omega$ and hence $\bar y \in K$. \\
\noindent
We next show that $\bar y$ is the solution of (\ref{obstacle}) with $u=\bar u$ by verifying that $\bar y$ minimizes the functional
\begin{displaymath}
y \mapsto \frac{1}{2} \int_{\Omega} | \nabla y |^2 \, dx - \int_{\Omega} (f+ \bar u) y \, dx \quad \mbox{ over } K.
\end{displaymath}
To see this, let $y \in K$ be arbitrary. Arguing as in the proof of Proposition 5.2 in \cite{B15} we obtain a sequence $(y_k)_{k \in \mathbb{N}}$ such that $y_k \in K \cap H^2(\Omega), k \in \mathbb{N}$ and
$y_k \rightarrow y$ in $H^1(\Omega)$ as $k \rightarrow \infty$. Since 
$\bar y_h$ is a solution of (\ref{equation: 3}) it satisfies $Q(\bar y_h) = \min_{z_h \in X_{h0}} Q(z_h)$, where
\begin{equation} \label{innermin}
Q(z_h)= \frac{1}{2} \int_{\Omega} | \nabla z_h |^2 \, dx + \frac{\gamma_h^3}{4} \int_{\Omega} I_h \{ [(z_h- \psi)^-]^4 \} \, dx - 
\int_{\Omega} (f+ \bar u_h) z_h \, dx.
\end{equation}
Therefore,
\begin{eqnarray}
\lefteqn{ \hspace{-2cm}  \frac{1}{2} \int_{\Omega} | \nabla \bar y_h |^2 \, dx + \frac{\gamma_h^3}{4} \int_{\Omega} I_h \{ [( \bar y_h- \psi)^-]^4 \} \, dx - 
\int_{\Omega} (f+ \bar u_h) \bar y_h \, dx } \nonumber  \\
& \leq & \frac{1}{2} \int_{\Omega} | \nabla I_h y_k |^2 \, dx - \int_{\Omega} (f+ \bar u_h) I_h y_k \, dx, \label{est1}
\end{eqnarray}
since $y_k(x_j) \geq \psi(x_j), j=1,\ldots,n$. 
Letting first $h \rightarrow 0$ and then $k \rightarrow \infty$ we infer that
\begin{displaymath} 
\frac{1}{2} \int_{\Omega} | \nabla \bar y |^2 \, dx - \int_{\Omega} (f+ \bar u) \bar y \, dx  \leq  \frac{1}{2} \int_{\Omega} | \nabla y |^2 \, dx - \int_{\Omega} (f+ \bar u)  y \, dx
\end{displaymath}
for all $y \in K$, so that $\bar y$ solves (\ref{obstacle}) with $u=\bar u$. If we use (\ref{est1}) for a 
sequence $\bar y_k \in K \cap H^2(\Omega)$ with $\bar y_k \rightarrow \bar y$ in $H^1(\Omega)$ we obtain
\begin{displaymath}
\frac{1}{2} \int_{\Omega} | \nabla \bar y_h |^2 \, dx + \frac{\gamma_h^3}{4} \int_{\Omega} I_h \{ [( \bar y_h- \psi)^-]^4 \} \, dx \leq 
\frac{1}{2} \int_{\Omega} | \nabla I_h \bar y_k |^2 \, dx + \int_{\Omega} (f+ \bar u_h)( \bar y_h - I_h \bar y_k) \, dx,
\end{displaymath}
from which we deduce that 
\begin{displaymath}
\limsup_{h \rightarrow 0} \bigl( \frac{1}{2} \int_{\Omega} | \nabla \bar y_h |^2 \, dx +  \frac{\gamma_h^3}{4} \int_{\Omega} I_h \{ [( \bar y_h- \psi)^-]^4 \} \, dx \bigr) 
 \leq  \frac{1}{2} \int_{\Omega} | \nabla \bar y_k |^2 \, dx + \int_{\Omega} (f+ \bar u)( \bar y - \bar y_k) \, dx
\end{displaymath}
and hence after sending $k \rightarrow \infty$
\begin{equation} \label{yconv3}
\limsup_{h \rightarrow 0} \bigl( \frac{1}{2} \int_{\Omega} | \nabla \bar y_h |^2 \, dx +  \frac{\gamma_h^3}{4} \int_{\Omega} I_h \{ [( \bar y_h- \psi)^-]^4 \} \, dx  \bigr)
 \leq   \frac{1}{2} \int_{\Omega} | \nabla \bar y |^2 \, dx.
\end{equation}
In particular, $\limsup_{h \rightarrow 0} \Vert \nabla \bar y_h \Vert_{L^2(\Omega)}^2 \leq \Vert \nabla \bar y \Vert_{L^2(\Omega)}^2$ and hence
$\Vert \nabla \bar y_h \Vert_{L^2(\Omega)}^2 \rightarrow \Vert \nabla \bar y \Vert_{L^2(\Omega)}^2$. Thus, we obtain together with (\ref{yconv3}) that
\begin{equation}  \label{yconv2}
\bar y_h \rightarrow \bar y \mbox{ in } H^1(\Omega) \quad \mbox{ and } \quad  \frac{\gamma_h^3}{4} \int_{\Omega} I_h \{ [( \bar y_h- \psi)^-]^4 \} \, dx \rightarrow 0.
\end{equation}
Next, let us introduce $\bar \xi, \bar \mu \in H^{-1}(\Omega)$ by 
\begin{eqnarray*}
\langle \bar \xi, \phi \rangle & = &  \int_{\Omega} \nabla \bar y \cdot \nabla \phi \, dx - \int_{\Omega} (f+ \bar u) \phi   \, dx, \\
\langle \bar \mu, \phi \rangle & = &  -\int_{\Omega} \nabla \bar p \cdot \nabla \phi \, dx + \int_{\Omega} (\bar y - y_0) \phi \, dx.
\end{eqnarray*}
Obviously, (\ref{com1c}) and (\ref{com3c}) are satisfied by definition, while (\ref{com2c}) follows from the fact that $\bar y$ is a solution of (\ref{obstacle}). Let us next show that (\ref{com4c}) holds. 
In view of (\ref{yconv2}), (\ref{pconv1}) and (\ref{foc2}) we have
\begin{eqnarray*}
| \langle \bar \mu, \bar y - \psi \rangle |  & \leftarrow & | \,  - \int_{\Omega} \nabla \bar p_h \cdot \nabla (\bar y_h - I_h \psi) \, dx + \int_{\Omega} (\bar y_h - y_0) (\bar y_h - I_h \psi) \, dx \, |  \\
& = & | \,  3 \gamma_h^3 \int_{\Omega} I_h \{ [(\bar y_h - \psi)^-]^3 \bar p_h \} \, dx \, | \\
& \leq & 3  \Bigl( \gamma_h^3 \int_{\Omega} I_h \{ [(\bar y_h - \psi)^-]^4 \} \, dx \Bigr)^{\frac{1}{2}}
\Bigl( \gamma_h^3 \int_{\Omega} I_h \{ [(\bar y_h - \psi)^-]^2 \bar p_h^2 \} \, dx \Bigr)^{\frac{1}{2}} \\
& \rightarrow & 0
\end{eqnarray*}
by  (\ref{yconv2}) and (\ref{phapriori1}). Hence $\langle \bar \mu, \bar y - \psi \rangle =0$ and in the same way we can show that $\langle \bar \xi, \bar p \rangle=0$. Furthermore,
(\ref{com5c}) is an immediate consequence of (\ref{foc3}).
It remains to prove (\ref{com6c}). Note that (\ref{foc4}) and the definition of $\eta_h$ imply
\begin{displaymath}
\bar p_h   \geq   \eta_h I_h [ (\bar y_h - \psi)^+] - \frac{ \eta_h }{3}  I_h [ (\bar y_h - \psi)^-] 
 =  \eta_h (\bar y_h - I_h \psi) - \frac{4}{3} \eta_h  I_h [ (\bar y_h - \psi)^-].
\end{displaymath}
Letting $h \rightarrow 0$ we find that $\bar p \geq \eta(\bar y - \psi)$ a.e. in $\Omega$, since $I_h[ (\bar y_h - \psi)^-] \rightarrow 0$ in $L^1(\Omega)$ in view of (\ref{yconv2}) 
and (\ref{upbound}).
Finally, let $\phi \in C^{\infty}_0(\Omega), \phi \geq 0$ be arbitrary. We then have by (\ref{foc2}) and (\ref{foc1}) that
\begin{eqnarray*}
\langle \bar \mu - \eta \bar \xi, \phi \rangle & \leftarrow & - \int_{\Omega} \nabla \bar p_h \cdot \nabla I_h \phi \, dx + \int_{\Omega}(\bar y_h - y_0) I_h \phi \, dx \\
& & - \eta_h  \int_{\Omega} \nabla \bar y_h \cdot \nabla I_h \phi \, dx + \eta_h \int_{\Omega} (f+ \bar u_h) I_h \phi \, dx \\
& = & 3 \gamma_h^3 \int_{\Omega} I_h \{ [(\bar y_h - \psi)^-]^2 \bar p_h I_h \phi \} \, dx + \eta_h \gamma_h^3 \int_{\Omega} I_h \{ [ (\bar y_h - \psi)^-]^3 I_h \phi \} \, dx \\
& = & \gamma_h^3 \, \sum_{\bar y_j < \psi_j} [(\bar y_j - \psi_j)^-]^2 \{ 3 \bar p_j + \eta_h (\bar y_j - \psi_j) \} \phi(x_j) m_j \\
& \geq & 0,
\end{eqnarray*}
by the definition of $\eta_h$ and since $\phi(x_j) \geq 0$. Hence $\bar \mu - \eta \bar \xi \geq 0$ and (\ref{com6c}) holds. 
\end{proof}

\noindent
In order to relate the system (\ref{com1c})--(\ref{com6c}) to known stationarity concepts we briefly recall the notion of strong stationarity:

\begin{definition} \label{defstrong}
The point $(u,y,\xi) \in L^2(\Omega) \times H^1_0(\Omega) \times H^{-1}(\Omega)$ is called strongly stationary if there exists $p \in H^1_0(\Omega)$ such that
\begin{eqnarray}
\int_{\Omega} \nabla  y \cdot \nabla \phi \, dx =  \int_{\Omega} (f+ u) \phi \, dx  + \langle \xi, \phi \rangle\quad \forall \phi \in H^1_0(\Omega)  &&  \label{strong1} \\
 y   \geq \psi \mbox{ a.e. in } \Omega, \;  \xi \geq 0, \;  \langle \xi,  y-\psi \rangle  =  0 &&  \label{strong2} \\
p \in S_y, \quad \int_{\Omega} \nabla  p \cdot \nabla \phi \, dx  \leq  \int_{\Omega} (y-y_0) \phi \, dx   \quad \forall \phi \in S_y &&  \label{strong3} \\
\alpha  u+ p =0 \quad \mbox{ a.e. in } \Omega, \label{strong4}
\end{eqnarray}
where $S_y= \lbrace \phi \in H^1_0(\Omega) \, | \, \phi \geq 0 \mbox{ q.e. on } Z_y, \langle \xi, \phi \rangle =0 \rbrace$ and
$Z_y=\lbrace x \in \Omega \, | \, y(x)=\psi(x) \rbrace$ (defined up to sets of zero capacity). Here,  q.e. stands for quasi-everywhere.
\end{definition}
\noindent 
It is shown in \cite[Theorem 2.2]{MP84} that a solution of $(\mathbb{P})$ is strongly stationary. This result was extended to the case of control constraints in
\cite{W14}. In our next result we show that a solution of the system (\ref{com1c})--(\ref{com6c}) is strongly stationary. Furthermore, we prove a continuous analogue of Theorem \ref{main2}.

\begin{theorem} \label{statprop}
Suppose that  $(u,y,p,\xi,\mu) \in L^2(\Omega) \times H^1_0(\Omega) \times H^1_0(\Omega) \times H^{-1}(\Omega) \times H^{-1}(\Omega)$ is a solution of (\ref{com1c})--(\ref{com6c}) for some $\eta \leq 0$. Then there holds: \\
a) $(u,y,\xi)$ is strongly stationary. \\[2mm]
b) If
\begin{equation} \label{optcond}
| \eta | \leq \alpha \lambda_1 + \sqrt{ \alpha^2 \lambda_1^2 + \alpha},
\end{equation}
then $ u$ is a global minimum for Problem~$(\mathbb{P})$. If the inequality (\ref{optcond}) is strict, then
$ u$ is the unique global minimum. 
\end{theorem}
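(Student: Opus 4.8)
For part a) the plan is to read off (\ref{strong1}), (\ref{strong2}) and (\ref{strong4}) directly from (\ref{com1c}), (\ref{com2c}) and (\ref{com5c}), so that only the adjoint relation (\ref{strong3}) requires work. I would first check that $p \in S_y$. The condition $\langle \xi, p \rangle = 0$ is precisely the first identity in (\ref{com4c}), and the sign condition $p \geq 0$ q.e. on $Z_y$ follows from the first inequality in (\ref{com6c}): since $p \geq \eta(y-\psi)$ holds q.e. and $y - \psi = 0$ q.e. on $Z_y$, we obtain $p \geq 0$ q.e. there. It then remains to establish the variational inequality in (\ref{strong3}); inserting (\ref{com3c}) this reduces to showing that $\langle \mu, \phi \rangle \geq 0$ for every $\phi \in S_y$.

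The key structural observation for this is that $\mu$ is concentrated on the contact set $Z_y$. Indeed, $\mu - \eta \xi \geq 0$ by the second inequality of (\ref{com6c}), and combining $\langle \mu, y-\psi \rangle = 0$ (from (\ref{com4c})) with $\langle \xi, y-\psi \rangle = 0$ (from (\ref{com2c})) gives $\langle \mu - \eta \xi, y-\psi \rangle = 0$; the complementarity of the nonnegative functional $\mu - \eta \xi$ with $y - \psi \geq 0$ forces $\mu - \eta \xi$ to be a nonnegative measure carried by $Z_y$. For $\phi \in S_y$ I would then split $\langle \mu, \phi \rangle = \langle \mu - \eta \xi, \phi \rangle + \eta \langle \xi, \phi \rangle$; the second term vanishes because $\langle \xi, \phi \rangle = 0$ by definition of $S_y$, while the first is nonnegative because $\phi \geq 0$ q.e. on $Z_y$ and $\mu - \eta \xi$ is a nonnegative measure supported there. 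This yields $\langle \mu, \phi \rangle \geq 0$ and hence (\ref{strong3}).

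For part b) I would mirror the proof of Theorem \ref{main2} at the continuous level. Given arbitrary $v \in L^2(\Omega)$, let $\tilde y$ be the solution of (\ref{obstacle}) with control $v$, and let $\tilde \xi \geq 0$ be its associated multiplier, so that $\langle \tilde \xi, \tilde y - \psi \rangle = 0$. Expanding as in (\ref{jghdif1}) gives
\[
J(v) - J(u) = \frac{1}{2}\Vert \tilde y - y \Vert_{L^2(\Omega)}^2 + \frac{\alpha}{2}\Vert v - u \Vert_{L^2(\Omega)}^2 + \int_\Omega (y-y_0)(\tilde y - y)\,dx + \alpha \int_\Omega u(v-u)\,dx.
\]
Testing (\ref{com3c}) with $\tilde y - y$, then evaluating $\int_\Omega \nabla p \cdot \nabla(\tilde y - y)\,dx$ by subtracting the state equations for $y$ and $\tilde y$ tested with $p$ (using $\langle \xi, p \rangle = 0$), and finally inserting (\ref{com5c}), the two cross terms collapse to $\langle \tilde \xi, p \rangle + \langle \mu, \tilde y - y \rangle$. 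This plays the role of the term $\gamma^3 \sum_j p_j r_j m_j$ in (\ref{jgdif2}).

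The decisive estimate is a lower bound on $\langle \tilde \xi, p \rangle + \langle \mu, \tilde y - y \rangle$. Using $p \geq \eta(y-\psi)$ together with the fact that $\tilde \xi$ is carried by $\{\tilde y = \psi\}$ gives $\langle \tilde \xi, p \rangle \geq -\eta \langle \tilde \xi, \tilde y - y \rangle$, while the concentration of $\mu - \eta \xi$ on $Z_y$ and $\tilde y - y = \tilde y - \psi \geq 0$ there gives $\langle \mu, \tilde y - y \rangle \geq \eta \langle \xi, \tilde y - y \rangle$. Hence $\langle \tilde \xi, p \rangle + \langle \mu, \tilde y - y \rangle \geq \eta \langle \xi - \tilde \xi, \tilde y - y \rangle$, and subtracting the state equations for $y$ and $\tilde y$ tested with $\tilde y - y$ converts the right-hand side into $|\eta| \int_\Omega |\nabla(\tilde y - y)|^2\,dx - |\eta| \int_\Omega (v-u)(\tilde y - y)\,dx$. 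Applying the definition (\ref{lambda1}) of $\lambda_1$ then produces exactly the quadratic integrand appearing at the end of the proof of Theorem \ref{main2}, so that positive semidefiniteness under (\ref{optcond}) yields $J(v) \geq J(u)$, while strict positive definiteness under the strict inequality forces $v = u$ and hence uniqueness. I expect the main obstacle to be the capacity-theoretic bookkeeping — making rigorous that the nonnegative functionals $\xi$, $\tilde \xi$, $\mu - \eta \xi$ are measures concentrated on the respective contact sets and that the q.e.\ inequalities may be integrated against them — since the underlying algebra is entirely parallel to the discrete computation in Theorem \ref{main2}.
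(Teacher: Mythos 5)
Your proposal is correct and follows essentially the same route as the paper's own proof: part a) via the nonnegative measure $\lambda = \mu - \eta\,\xi$ concentrated on $Z_y$ and the splitting $\langle \mu, \phi \rangle = \langle \mu - \eta \xi, \phi \rangle + \eta \langle \xi, \phi \rangle$ for $\phi \in S_y$, and part b) by mirroring Theorem \ref{main2} to reach the cross term $\langle \tilde \xi, p \rangle + \langle \mu, \tilde y - \psi \rangle$, bounding it below by $-\eta \langle \tilde \xi - \xi, \tilde y - y \rangle$, and concluding with the eigenvalue inequality and the same quadratic form. The capacity-theoretic points you flag (upgrading a.e.\ to q.e.\ inequalities and integrating them against $H^{-1}$ multipliers) are exactly what the paper handles by citing Lemmas 2.3 and 2.4 of the reference on strong stationarity, so your plan is complete modulo those standard citations.
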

\begin{proof} a) We only have to prove (\ref{strong3}). To do so, we shall make use of some basic results from capacity theory for which we refer  the reader to \cite[Section 2]{W14}. 
Since $p \geq \eta(y-\psi) \mbox{ a.e. in }\Omega$ we deduce from \cite[Lemma 2.3]{W14} that $p \geq \eta(y-\psi) \mbox{ q.e. on }\Omega$ and hence that $p \geq 0 \mbox{ q.e. on }Z_y$.
Combining this relation with (\ref{com4c}) we infer that $p \in S_y$. Next, (\ref{com6c}) implies that $\lambda:= \mu - \eta \xi$ is a nonnegative functional in $H^{-1}(\Omega)$. Hence there exists a
regular Borel measure (also denoted by $\lambda$) such that 
\begin{displaymath}
\langle \lambda, \phi \rangle = \int_{\Omega} \phi \, d \lambda, \quad \phi \in H^1_0(\Omega),
\end{displaymath}
where we integrate the quasi-continuous representative of $\phi$. Lemma 2.4 in \cite{W14} yields that $y \geq \psi \, \lambda$-a.e. in $\Omega$ so that
we have for every compact set $C \subset \Omega$
\begin{displaymath}
0 \leq \int_C (y - \psi) \, d \lambda \leq \int_{\Omega} (y - \psi) \, d \lambda = \langle \lambda, y-\psi \rangle =0
\end{displaymath}
in view of (\ref{com2c}) and (\ref{com4c}). Thus $\lambda(C \cap \lbrace y> \psi \rbrace)=0$ for all compact sets $C \subset \Omega$ and hence $\lambda(\lbrace y> \psi \rbrace)=0$. We deduce
for every $\phi \in S_y$
\begin{displaymath}
\langle \mu, \phi \rangle = \langle \mu - \eta \xi, \phi \rangle = \int_{\Omega} \phi \, d \lambda = \int_{\lbrace y > \psi \rbrace} \phi \, d \lambda +
\int_{\lbrace y = \psi \rbrace} \phi \, d \lambda = \int_{Z_y} \phi \, d \lambda \geq 0,
\end{displaymath}
since $\phi \geq 0$ q.e. (and hence also $\lambda$-a.e.) on $Z_y$. Combining this result with (\ref{com3c}) we infer
\begin{displaymath}
\int_{\Omega} \nabla p \cdot \nabla \phi \, dx = \int_{\Omega}(y-y_0) \phi \, dx  - \langle \mu, \phi \rangle \leq \int_{\Omega}(y-y_0) \phi \, dx \qquad \forall \phi \in S_y,
\end{displaymath}
and hence (\ref{strong2}) is satisfied. \\[2mm]
b) Let $v \in L^2(\Omega)$ be arbitrary and $\tilde y \in K$ the solution of 
\begin{displaymath}
\int_{\Omega} \nabla \tilde y \cdot \nabla ( \phi - \tilde y) dx \geq \int_{\Omega} (f+v) (\phi - \tilde y) dx \qquad \forall \phi \in K.
\end{displaymath}
Defining $\tilde \xi \in H^{-1}(\Omega)$ by
\begin{displaymath}
\langle \tilde \xi, \phi \rangle:= \int_{\Omega} \nabla \tilde y \cdot \nabla \phi \, dx - \int_{\Omega} (f+v) \phi \, dx,
\end{displaymath}
we have that $\tilde \xi \geq 0$ and $\langle \tilde \xi, \tilde y - \psi \rangle =0$. Similarly as in the proof of Theorem \ref{main2} we calculate
\begin{equation} \label{dif1}
J(v) - J(u)  =  \frac{1}{2} \Vert \tilde y - y \Vert_{L^2(\Omega)}^2  + \frac{\alpha}{2} \Vert v - u \Vert_{L^2(\Omega)}^2 
 + \int_{\Omega}  (y-y_0) (\tilde y - y) \, dx  + \alpha \int_{\Omega} u (v-u) \, dx. 
\end{equation}
We infer from (\ref{com3c}), (\ref{com1c}) and the definition of $\tilde \xi$ that
\begin{eqnarray*}
 \int_{\Omega}  (y-y_0) (\tilde y - y) \, dx & =  & \int_{\Omega} \nabla p \cdot \nabla (\tilde y -y) \, dx + \langle \mu, \tilde y - y \rangle  \\
& = & \int_{\Omega} p (v-u) \, dx  + \langle \tilde \xi - \xi,p \rangle + \langle \mu, \tilde y - y \rangle.
\end{eqnarray*}
Inserting this relation into (\ref{dif1}) and recalling (\ref{com2c}), (\ref{com4c}) and (\ref{com6c}) we derive
\begin{eqnarray}
J(v) - J(u) & = & \frac{1}{2} \Vert \tilde y - y \Vert_{L^2(\Omega)}^2 + \frac{\alpha}{2} \Vert v - u \Vert_{L^2(\Omega)}^2
+ \langle \tilde \xi,p \rangle  + \langle \mu,\tilde y - \psi \rangle \nonumber  \\
& \geq &  \frac{1}{2} \Vert \tilde y - y \Vert_{L^2(\Omega)}^2 + \frac{\alpha}{2} \Vert v - u \Vert_{L^2(\Omega)}^2
 + \eta \, \langle \tilde \xi, y - \psi \rangle + \eta \, \langle \xi, \tilde y - \psi \rangle \label{dif3} \\
& = & \frac{1}{2} \Vert \tilde y - y \Vert_{L^2(\Omega)}^2 + \frac{\alpha}{2} \Vert v - u \Vert_{L^2(\Omega)}^2 - \eta \langle \tilde \xi - \xi,  \tilde y - y \rangle, \nonumber
\end{eqnarray} 
since $\tilde y - \psi \geq 0$ and $\langle  \tilde \xi,  \tilde y - \psi \rangle=0$. Using once more (\ref{com1c}), the definition of $\tilde \xi$ and recalling (\ref{lambda1}) we may write
\begin{eqnarray*}
\langle \tilde \xi - \xi,  \tilde y - y \rangle & = & \int_{\Omega} | \nabla( \tilde{y} - y) |^2 \, dx - \int_{\Omega} (\tilde{y} - y) (v-u) \, dx \\
& \geq & \lambda_1 \int_{\Omega} | \tilde{y} - y |^2 \, dx - \int_{\Omega} (\tilde{y} - y) (v-u) \, dx.
\end{eqnarray*}
If we multiply this relation by $-\eta= | \eta |$ and insert it into  (\ref{dif3}) we obtain
\begin{displaymath}
 J(v) - J(u) \geq \int_{\Omega} [ (\frac{1}{2} + | \eta | \lambda_1) | \tilde{y} - y |^2 + \frac{\alpha}{2} | v-u|^2 
 - | \eta | (v-u) (\tilde{y} - y) ] \, dx
\end{displaymath}
and the result follows in the same way as in the proof of Theorem \ref{main2}.
\end{proof} 

\noindent
As an immediate consequence we have:

\begin{corollary} \label{cor1}
Let $(\bar u_h,\bar y_h,\bar p_h)_{0<h \leq h_0}$ be a sequence of solutions of (\ref{foc1})--(\ref{foc3}) with 
corresponding $\eta_h \leq 0$ given by (\ref{defeta}) and suppose that
\begin{equation} \label{etahb}
| \eta_h | \leq \alpha \lambda_1 + \sqrt{ \alpha^2 \lambda_1^2 + \alpha}, \quad 0<h \leq h_0.
\end{equation}
Then
\[
\bar u_h \to \bar u \mbox{ in } L^2(\Omega) \mbox{ for a subsequence } h \to 0,
\]
where $\bar u$ is a global minimum for Problem~$(\mathbb{P})$. If 
\begin{equation} \label{strict}
\displaystyle
| \eta_h | \leq \kappa \alpha \bigl( \lambda_1 + \sqrt{ \alpha^2 \lambda_1^2 + \alpha} \bigr), \quad 0< h \leq h_0,
\end{equation}
for some $0 < \kappa <1$, then $\bar u$ is the unique global solution of $(\mathbb{P})$ and the whole sequence $( \bar u_h)_{0<h \leq h_0}$ converges 
to $\bar u$. 
\end{corollary}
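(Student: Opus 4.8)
The plan is to chain together Theorem \ref{main2}, Theorem \ref{stationarylimit} and Theorem \ref{statprop}(b). The only point requiring genuine work is the verification of hypothesis (\ref{upbound}) of Theorem \ref{stationarylimit}: the assumption (\ref{etahb}) delivers the bound $|\eta_h| \leq C$ directly, but a uniform bound $\Vert \bar u_h \Vert_{L^2(\Omega)} \leq C$ must still be produced before the compactness result can be applied.

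To obtain it I would exploit that (\ref{etahb}) is exactly condition (\ref{optcondgammah}), so that Theorem \ref{main2} (which, like the proof of Theorem \ref{stationarylimit}, uses the sign condition (\ref{foc4}) satisfied by the stationary points under consideration) guarantees that each $\bar u_h$ is a global minimum of $(\mathbb{P}^{\gamma}_h)$. Comparing with the admissible control $u \equiv 0$ then yields
\[
\frac{\alpha}{2}\Vert \bar u_h \Vert_{L^2(\Omega)}^2 \leq J^{\gamma}_h(\bar u_h) \leq J^{\gamma}_h(0) = \frac{1}{2}\Vert y_h^0 - y_0 \Vert_{L^2(\Omega)}^2,
\]
where $y_h^0 \in X_{h0}$ solves (\ref{equation: 3}) with $u=0$. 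Testing that equation with $y_h^0 - I_h \psi$ and discarding the nonnegative penalty term gives, exactly as in (\ref{estyh}) and using $\psi \in H^2(\Omega)$, a bound $\Vert y_h^0 \Vert_{H^1(\Omega)} \leq c$ uniform in $h$; hence $\Vert y_h^0 - y_0 \Vert_{L^2(\Omega)} \leq c$ and therefore $\Vert \bar u_h \Vert_{L^2(\Omega)} \leq C$. This is precisely what breaks the apparent circularity of the estimates (\ref{estyh})--(\ref{phapriori1}), in which the a priori control of $\bar y_h$, and through $\bar p_h = -\alpha \bar u_h$ also of $\bar u_h$, would otherwise presuppose a bound on $\bar u_h$.

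With (\ref{upbound}) now at hand I would invoke Theorem \ref{stationarylimit} to extract a subsequence along which $\bar u_h \to \bar u$ in $L^2(\Omega)$, $\eta_h \to \eta \leq 0$, and the limit $(\bar u,\bar y,\bar p,\bar \xi,\bar \mu)$ solves (\ref{com1c})--(\ref{com6c}). Passing to the limit in (\ref{etahb}) gives $|\eta| \leq \alpha \lambda_1 + \sqrt{\alpha^2 \lambda_1^2 + \alpha}$, which is (\ref{optcond}), so Theorem \ref{statprop}(b) shows that $\bar u$ is a global minimum of $(\mathbb{P})$. This establishes the first assertion.

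For the second assertion I would note that under the strengthened bound (\ref{strict}) the limiting inequality (\ref{optcond}) becomes strict (since $0<\kappa<1$), whence Theorem \ref{statprop}(b) yields that $\bar u$ is the \emph{unique} global minimum. To upgrade subsequential convergence to convergence of the whole family I would apply the standard subsequence principle: the argument above runs unchanged starting from any subsequence of $(\bar u_h)$ and always returns a further subsequence converging in $L^2(\Omega)$ to a global minimum of $(\mathbb{P})$, which by uniqueness equals $\bar u$; since every subsequence thus possesses a sub-subsequence with the same limit, the whole family $(\bar u_h)_{0<h \leq h_0}$ converges to $\bar u$. The main obstacle throughout is the uniform bound on $\Vert \bar u_h \Vert_{L^2(\Omega)}$ obtained from the discrete global-optimality comparison with $u \equiv 0$; everything else is an assembly of the preceding theorems.
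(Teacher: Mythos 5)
Your proposal is correct and follows essentially the same route as the paper: use (\ref{etahb}) together with Theorem \ref{main2} to conclude that each $\bar u_h$ is a global minimum of $(\mathbb{P}^{\gamma_h}_h)$, compare with $u \equiv 0$ (bounding the associated state $\hat y_h$ in $H^1(\Omega)$ exactly as in (\ref{estyh})) to obtain the uniform bound $\Vert \bar u_h \Vert_{L^2(\Omega)} \leq C$, then apply Theorem \ref{stationarylimit} followed by Theorem \ref{statprop}~b). Your explicit subsequence--sub-subsequence argument for convergence of the whole family under (\ref{strict}) is a detail the paper leaves implicit, but it is the standard completion of the same argument, not a different approach.
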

\begin{proof} Let us
denote by $\hat y_h$  the solution of (\ref{equation: 3}) with $u \equiv 0$. In the same way as at the beginning of the proof of Theorem \ref{stationarylimit} we can show that  $\Vert \hat y_h \Vert_{H^1(\Omega)} \leq c$.
It follows from (\ref{etahb}) and Theorem \ref{main2} that $\bar u_h$ is a solution of $(\mathbb{P}^{\gamma_h}_h)$, so that in particular $J^{\gamma_h}_h(\bar u_h) \leq J^{\gamma_h}_h(0), \, 0 < h \leq h_0$ and therefore
\begin{equation} \label{uhapriori}
\frac{\alpha}{2} \Vert \bar u_h \Vert^2_{L^2(\Omega)} \leq \frac{1}{2} \Vert \hat{y}_h - y_0 \Vert^2_{L^2(\Omega)} \leq c.
\end{equation}
Combining (\ref{uhapriori}) with (\ref{etahb}) we may infer from Theorem \ref{stationarylimit} that there exists a subsequence $h \rightarrow 0$ and a solution 
$(\bar u,\bar y, \bar p, \bar \xi, \bar \mu)$ of (\ref{com1c})--(\ref{com6c}) such that 
\begin{displaymath}
\bar u_h \rightarrow \bar u \mbox{ in } L^2(\Omega), \; \bar y_h \rightarrow \bar y \mbox{ in } H^1(\Omega), \, \eta_h \rightarrow \eta \leq 0.
\end{displaymath}
Since $| \eta | \leq \alpha \lambda_1 + \sqrt{ \alpha^2 \lambda_1^2 + \alpha}$ it follows from Theorem \ref{statprop} b) that $\bar u$ is a global minimum for Problem~$(\mathbb{P})$.
If (\ref{strict}) holds, then the above inequality is strict and the minimum is unique.
\end{proof}

\section{The unpenalised case}\label{Sec4}
In this section we briefly discuss how our theory can be adapted to the case when the state is approximated by a discrete version of the variational inequality (\ref{obstacle}),
namely we consider the following discrete control problem:
\[
(\mathbb{P}_h) \quad
 \min_{u \in L^2(\Omega)} J_h(u) = \frac{1}{2} \Vert y_h - y_0 \Vert_{L^2(\Omega)}^2 +  \frac{\alpha}{2} \Vert u \Vert_{L^2(\Omega)}^2
\]
subject to
\begin{equation} \label{discobstacle}
y_h \in K_h, \quad \int_{\Omega} \nabla y_h \cdot \nabla ( \phi_h - y_h) dx \geq \int_{\Omega} (f+u) (\phi_h -y_h) dx \qquad \forall \phi_h \in K_h,
\end{equation}
where
\begin{displaymath}
K_h = \lbrace \phi_h \in X_{h0} \, | \, \phi_h(x) \geq (I_h \psi)(x), x \in \Omega \rbrace.
\end{displaymath}
Existence of a solution of $(\mathbb{P}_h)$ is shown in \cite[Section 3]{MT13}. We shall formulate the necessary first order optimality 
conditions in matrix/vector form. To do so, let us define the mass matrix $\mathcal M$ and the stiffness matrix $\mathcal A$, i.e.
\begin{displaymath}
\mathcal M_{ij}:= \int_{\Omega} \phi_i \phi_j \, dx, \quad \mathcal A_{ij}:= \int_{\Omega} \nabla \phi_i \cdot \nabla \phi_j \, dx, \quad i,j=1,\ldots,n.
\end{displaymath}
Introducing a slack variable $\bx \in \mathbb{R}^n$, problem (\ref{discobstacle}) can be written as 
\begin{eqnarray*}
\mathcal A \by = \Bigl( \int_{\Omega} (f+u) \phi_i \, dx \Bigr)_{i=1}^n + \bx &&   \\
y_j  \geq \psi_j, \xi_j \geq 0, \xi_j(y_j- \psi_j)  =  0, j=1,\ldots,n. &&  
\end{eqnarray*}
Here, $y_h= \sum_{j=1}^n y_j \phi_j$ and $\by = (y_j)_{j=1}^n$.  The following result is
proved in \cite[Theorem 4.1]{MT13}. Note that the
system given below slightly differs from the one in \cite{MT13} in that we have replaced $\bmu$ by $-\bmu$.

\begin{theorem} Let $u_h \in L^2(\Omega)$ be a local optimal solution of $(\mathbb{P}_h)$ with associated state $y_h  \in X_{h0}$
and slack variable $\bx \in \mathbb{R}^n$. Then there exist an adjoint state $p_h \in X_{h0}$ and a multiplier $\bmu \in \mathbb{R}^n$
such that the following strong stationarity system is satisfied:
\begin{eqnarray}
\mathcal A \by = \Bigl( \int_{\Omega} (f+u_h) \phi_i \, dx \Bigr)_{i=1}^n + \bx &&  \label{com1} \\
y_j  \geq \psi_j, \xi_j \geq 0, \xi_j (y_j- \psi_j)  =  0, j=1,\ldots,n &&  \label{com2} \\
\mathcal A^T \bp = \mathcal M \by - \Bigl( \int_{\Omega} y_0 \phi_i \, dx \Bigr)_{i=1}^n   - \bmu \label{com3} \\
(y_j - \psi_j) \mu_j= 0, \, \xi_j p_j = 0, j=1,\ldots,n \label{com4} \\
\alpha u_h+ p_h =0 \quad \mbox{ in } \Omega \label{com5} \\
\mu_k \geq 0, \, p_k \geq 0 \mbox{ for all } k \in \lbrace 1,\ldots,n \rbrace \mbox{ with } y_k-\psi_k=\xi_k=0. \label{com6} 
\end{eqnarray}
Here, $y_h=\sum_{j=1}^n y_j \phi_j, p_h=\sum_{j=1}^n p_j \phi_j$.
\end{theorem}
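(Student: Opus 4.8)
The plan is to treat $(\mathbb{P}_h)$ as a finite-dimensional mathematical program with complementarity constraints and to extract strong stationarity from the directional differentiability of the control-to-state map. Writing $b(u):=\bigl(\int_\Omega (f+u)\phi_i\,dx\bigr)_{i=1}^n$, the discrete obstacle problem \eqref{discobstacle} determines $\by$ as the unique solution of the linear complementarity problem $\mathcal A\by\geq b(u)$, $\by\geq\bpsi$, $(\mathcal A\by-b(u))\cdot(\by-\bpsi)=0$ with slack $\bx=\mathcal A\by-b(u)$. Since $\mathcal A$ is symmetric positive definite, this LCP has a unique solution depending Lipschitz-continuously on $b(u)$, so the reduced objective $j_h(u):=J_h(u)$ is well defined. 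It is a classical fact (Mignot's conical differentiability result, here in a finite-dimensional and hence elementary setting) that $u\mapsto\by$ is directionally differentiable: for a direction $d$ the derivative $\by'=\by'(d)$ is the unique solution of the critical-cone variational inequality
\begin{equation} \label{ddvi}
\by'\in\mathcal K,\qquad \langle \mathcal A\by'-b(d),\,w-\by'\rangle\geq 0\quad\forall\, w\in\mathcal K,
\end{equation}
where $\mathcal K=\{\,w\in\mathbb R^n:\ w_j=0 \text{ for } j\in\mathcal A_+,\ w_j\geq 0 \text{ for } j\in\mathcal A_0\,\}$, with the strongly active set $\mathcal A_+=\{j:\xi_j>0\}$, the biactive set $\mathcal A_0=\{j:y_j=\psi_j,\ \xi_j=0\}$ and the inactive set $\mathcal I=\{j:y_j>\psi_j\}$.

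Second, local optimality of $u_h$ yields B-stationarity of the reduced problem: differentiating $j_h$ in direction $d$ and writing the state term in vector form gives
\begin{equation} \label{bstat}
j_h'(u_h;d)=\langle \mathcal M\by-\bd,\,\by'(d)\rangle+\alpha\,(u_h,d)_{L^2(\Omega)}\geq 0\qquad\forall\, d\in L^2(\Omega),
\end{equation}
where $\bd=\bigl(\int_\Omega y_0\phi_i\,dx\bigr)_{i=1}^n$. The goal is to encode this inequality, which is nonlinear in $d$ because of the inequality constraints defining $\mathcal K$, by a single adjoint pair $(\bp,\bmu)$ satisfying \eqref{com3}, \eqref{com4} and \eqref{com6}.

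Third, I would construct $\bp$ and $\bmu$ through LCP duality associated with \eqref{ddvi}. Since the map $b(\cdot)$ is surjective onto $\mathbb R^n$ (the $\phi_i$ form a basis and $\mathcal M$ is invertible), every admissible $\by'\in\mathcal K$ is realizable by some $d$. On the inactive set $\mathcal I$ the constraint is locally an equation, so directions producing $\by'$ supported away from $\mathcal A_+\cup\mathcal A_0$ are unconstrained; testing \eqref{bstat} with $\pm d$ then forces the adjoint equation $\mathcal A^T\bp=\mathcal M\by-\bd-\bmu$ together with the support conditions $p_j=0$ on $\mathcal A_+$ (equivalently $\xi_jp_j=0$) and $\mu_j=0$ on $\mathcal I$ (equivalently $(y_j-\psi_j)\mu_j=0$), which are precisely \eqref{com3}–\eqref{com4}. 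Concretely, I would take $\bp$ as the solution of the adjoint critical-cone VI dual to \eqref{ddvi} and define $\bmu:=\mathcal M\by-\bd-\mathcal A^T\bp$.

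The crux, and the main obstacle, is the biactive set $\mathcal A_0$, where $\mathcal K$ carries genuine inequality constraints and $d\mapsto\by'$ is only piecewise linear and positively homogeneous, so that the passage from \eqref{bstat} to the pointwise sign conditions \eqref{com6} is the delicate step. For a fixed $k\in\mathcal A_0$ I would select a direction $d$ for which \eqref{ddvi} admits a solution with $y'_k>0$ and $y'_j=0$ at the remaining active indices; inserting this $\by'$ into \eqref{bstat} and using the already established form of the adjoint yields $p_k\geq 0$. The companion bound $\mu_k\geq 0$ follows by exploiting the complementarity between \eqref{ddvi} and its adjoint along the opposite branch of the biactive index (keeping $k$ active, $y'_k=0$, while the linearized slack is free to be positive). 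Verifying that both branches are realizable by genuine controls and treating them consistently across all biactive indices is exactly where strong—as opposed to merely Clarke or Mordukhovich—stationarity is secured; this is the technical heart of the argument and matches the derivation of \cite[Theorem 4.1]{MT13}.
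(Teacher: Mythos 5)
The paper itself contains no proof of this theorem: it is quoted from \cite[Theorem 4.1]{MT13} (with $\bmu$ replaced by $-\bmu$), and your overall strategy --- rewrite \eqref{discobstacle} as a finite-dimensional LCP, use directional differentiability of the solution map, pass to B-stationarity, and exploit that the undiscretized control reaches every right-hand side in $\mathbb{R}^n$ --- is exactly the route of that reference, so the approach is the right one. Two points in your sketch are nevertheless genuinely defective. The first is that your branch assignments on the biactive set are swapped, so the step ``inserting this $\by'$ into B-stationarity \dots yields $p_k\geq 0$'' fails if carried out literally. Indeed, once \eqref{com3} and \eqref{com4} hold and once the control term has been reduced to $\alpha\,(u_h,d)_{L^2(\Omega)}=-\langle\bp,\br\rangle$ with $\br:=\bigl(\int_\Omega d\,\phi_i\,dx\bigr)_{i=1}^n$, your B-stationarity inequality becomes
\begin{displaymath}
\bigl\langle \bp,\,\mathcal A\,\by'(d)-\br\bigr\rangle+\bigl\langle \bmu,\,\by'(d)\bigr\rangle\;\geq\;0\qquad\forall\, d\in L^2(\Omega).
\end{displaymath}
For a biactive index $k$, the choice $\br=\mathcal A e_k$ ($e_k$ the $k$-th unit vector) yields $\by'(d)=e_k$ with identically vanishing linearized slack, hence $\mu_k\geq 0$, not $p_k\geq 0$; the choice $\br=-e_k$ yields $\by'(d)=0$ with linearized slack $e_k$, hence $p_k\geq 0$. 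The branch with $y_k'>0$ probes $\bmu$ (the multiplier dual to the primal constraint $y_k'\geq 0$), while the branch with positive slack probes $\bp$; you claim the opposite pairing. Your conclusion survives only because you invoke both branches, but each individual derivation as you describe it is wrong.

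The second and more serious gap is that you never obtain \eqref{com5}, nor the membership $u_h\in X_{h0}$ that it presupposes, and without these your construction of the adjoint is circular: the ``adjoint critical-cone VI dual to'' your linearized VI is never specified, and testing B-stationarity with $\pm d$ cannot ``force'' the adjoint equation \eqref{com3} as long as the term $\alpha(u_h,d)_{L^2(\Omega)}$ depends on $d$ itself rather than only on $\br$ --- distinct directions with the same $\br$ would impose inconsistent requirements. The missing step is to test B-stationarity with directions $d$ orthogonal to $X_{h0}$: these satisfy $\by'(d)=0$, so testing $\pm d$ gives $(u_h,d)_{L^2(\Omega)}=0$ and hence $u_h\in X_{h0}$. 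Only then can you set $p_h:=-\alpha u_h\in X_{h0}$ (which is precisely \eqref{com5}), legitimately write $\alpha(u_h,d)_{L^2(\Omega)}=-\langle\bp,\br\rangle$, define $\bmu$ through \eqref{com3}, and run the direction-picking above (together with $\br=\pm e_j$ for strongly active $j$, and $\br=\pm\mathcal A\bz$ for $\bz$ supported on the inactive set) to verify \eqref{com4} and \eqref{com6}. With these two repairs your outline closes to a complete proof.
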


\noindent
In practice, the system (\ref{com1})--(\ref{com5}) can be solved with the help of 
a  primal--dual active set strategy, see \cite[Section 6]{IK00}. The corresponding numerical experiments indicate that this method typically enters into a cycle in the presence of bi-active sets. This is one of the 
reasons to base our numerical treatment on the Moreau-Yosida relaxed version $(\mathbb{P}^\gamma_h)$ of the original optimal control problem $(\mathbb{P})$.

\noindent
The following result is the analogue of Theorem \ref{main2}. We remark that the quantity $\eta$ in (\ref{com6a})  has been used in \cite{BM00} (see (4.19)) in order to show the equivalence between certain optimality systems.

\begin{theorem}  \label{main1}
Suppose that $(u_h,y_h,p_h,\bx,\bmu) \in L^2(\Omega) \times X_{h0}  \times X_{h0} \times \mathbb{R}^n \times \mathbb{R}^n$ is a solution of (\ref{com1})-(\ref{com6})
and define
\begin{equation} \label{com6a}
\eta := \min \Bigl( \min_{y_k > \psi_k} \frac{p_k}{y_k - \psi_k}, \min_{\xi_k>0} \frac{\mu_k}{\xi_k},0 \Bigr).
\end{equation}
If
\begin{equation} \label{optcondh}
| \eta | \leq \alpha \lambda_1 + \sqrt{ \alpha^2 \lambda_1^2 + \alpha},
\end{equation}
then $ u_h$ is a global minimum for Problem~$(\mathbb{P}_h)$. If the inequality (\ref{optcondh}) is strict, then
$ u_h$ is the unique global minimum. 
\end{theorem}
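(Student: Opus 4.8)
The plan is to mirror the proof of Theorem \ref{main2}, but to work throughout with the variational inequality (\ref{discobstacle}) in its matrix/vector form (\ref{com1})--(\ref{com6}) rather than with the penalised equation; the continuous version in Theorem \ref{statprop} b) is the closest template. As a preliminary step I would convert the sign and complementarity conditions into the two componentwise vector inequalities $\bp \geq \eta(\by - \bpsi)$ and $\bmu \geq \eta \bx$. This is obtained by partitioning $\{1,\dots,n\} = I_1 \cup I_2 \cup I_3$ with $I_1 = \{k : y_k > \psi_k\}$, $I_2 = \{k : \xi_k > 0\}$ and $I_3 = \{k : y_k - \psi_k = \xi_k = 0\}$, which is a genuine partition since $\xi_k(y_k - \psi_k) = 0$ and $y_k \geq \psi_k$. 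On $I_1$ the definition (\ref{com6a}) of $\eta$ gives $p_k \geq \eta(y_k - \psi_k)$ while (\ref{com4}) forces $\mu_k = 0 = \eta\xi_k$; on $I_2$ the roles are symmetric; and on $I_3$ the sign conditions (\ref{com6}) give $p_k,\mu_k \geq 0$, which dominate the vanishing right-hand sides.

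Next, for arbitrary $v \in L^2(\Omega)$ I would let $\tilde y_h \in K_h$ solve (\ref{discobstacle}) with control $v$ and denote by $\tilde{\bx}$ its associated slack vector. Exactly as in (\ref{dif1}), expanding the two norms gives
\[
J_h(v) - J_h(u_h) = \tfrac12\|\tilde y_h - y_h\|_{L^2(\Omega)}^2 + \tfrac\alpha2\|v - u_h\|_{L^2(\Omega)}^2 + \int_\Omega (y_h - y_0)(\tilde y_h - y_h)\,dx + \alpha\int_\Omega u_h(v - u_h)\,dx.
\]
I would then rewrite the cross term using the adjoint equation (\ref{com3}), i.e. $\mathcal M\by - (\int_\Omega y_0\phi_i\,dx)_i = \mathcal A^T\bp + \bmu$, followed by the two state equations (\ref{com1}) for the controls $u_h$ and $v$; this produces $\int_\Omega p_h(v - u_h)\,dx + \bp^T(\tilde{\bx} - \bx) + \bmu^T(\tilde{\by} - \by)$. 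Finally (\ref{com5}) cancels $\int_\Omega p_h(v - u_h)\,dx + \alpha\int_\Omega u_h(v - u_h)\,dx$, leaving
\[
J_h(v) - J_h(u_h) = \tfrac12\|\tilde y_h - y_h\|_{L^2(\Omega)}^2 + \tfrac\alpha2\|v - u_h\|_{L^2(\Omega)}^2 + \bp^T(\tilde{\bx} - \bx) + \bmu^T(\tilde{\by} - \by).
\]

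The heart of the argument, and the step I expect to be the main obstacle, is a lower bound for the multiplier term $\bp^T(\tilde{\bx} - \bx) + \bmu^T(\tilde{\by} - \by)$; it plays the role of the estimate $r_h \geq 0$ in Theorem \ref{main2} and of the manipulation of $\langle\tilde\xi - \xi, \tilde y - y\rangle$ in Theorem \ref{statprop}. Using the complementarities $\bp^T\bx = 0$ and $\bmu^T(\by - \bpsi) = 0$ from (\ref{com4}) (so that $\bmu^T(\tilde{\by} - \by) = \bmu^T(\tilde{\by} - \bpsi)$), together with $\tilde{\bx} \geq 0$, $\tilde{\by} - \bpsi \geq 0$ and the two vector inequalities from the first step, one gets $\bp^T\tilde{\bx} \geq \eta(\by - \bpsi)^T\tilde{\bx}$ and $\bmu^T(\tilde{\by} - \bpsi) \geq \eta\,\bx^T(\tilde{\by} - \bpsi)$, hence the term is bounded below by $\eta[(\by - \bpsi)^T\tilde{\bx} + \bx^T(\tilde{\by} - \bpsi)]$. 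Expanding $(\tilde{\bx} - \bx)^T(\tilde{\by} - \by)$ along $\tilde{\by} - \by = (\tilde{\by}-\bpsi) - (\by-\bpsi)$ and discarding the two products $\bx^T(\by - \bpsi) = \tilde{\bx}^T(\tilde{\by} - \bpsi) = 0$ identifies this bracket with $-(\tilde{\bx} - \bx)^T(\tilde{\by} - \by)$, so the multiplier term is $\geq -\eta(\tilde{\bx} - \bx)^T(\tilde{\by} - \by) = |\eta|\,(\tilde{\bx} - \bx)^T(\tilde{\by} - \by)$. Keeping the sign of $\eta \leq 0$ and the signs of the slack vectors consistent throughout is the delicate bookkeeping here.

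To finish, I would substitute $\tilde{\bx} - \bx = \mathcal A(\tilde{\by} - \by) - (\int_\Omega (v - u_h)\phi_i\,dx)_i$ from (\ref{com1}), giving
\[
|\eta|\,(\tilde{\bx} - \bx)^T(\tilde{\by} - \by) = |\eta|\Bigl( \int_\Omega |\nabla(\tilde y_h - y_h)|^2\,dx - \int_\Omega (v - u_h)(\tilde y_h - y_h)\,dx\Bigr),
\]
and then invoke (\ref{lambda1}) to bound $\int_\Omega|\nabla(\tilde y_h - y_h)|^2\,dx$ from below by $\lambda_1\int_\Omega|\tilde y_h - y_h|^2\,dx$. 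This reduces $J_h(v) - J_h(u_h)$ to the integral of the quadratic form $(\tfrac12 + |\eta|\lambda_1)x_1^2 + \tfrac\alpha2 x_2^2 - |\eta|\,x_1 x_2$ in $x_1 = \tilde y_h - y_h$ and $x_2 = v - u_h$, exactly as at the end of the proof of Theorem \ref{main2}. Its positive semidefiniteness (definiteness) under (\ref{optcondh}) (respectively the strict inequality) was already established there, with threshold $\mu = \alpha\lambda_1 + \sqrt{\alpha^2\lambda_1^2 + \alpha}$, and yields that $u_h$ is a global (respectively unique global) minimum of $(\mathbb{P}_h)$.
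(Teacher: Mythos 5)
Your proposal is correct and takes essentially the same approach as the paper: the paper's proof consists precisely of noting that (\ref{com6}) and the definition of $\eta$ yield the componentwise inequalities $p_j \geq \eta\,(y_j-\psi_j)$ and $\mu_j \geq \eta\,\xi_j$, and then running the argument of Theorem \ref{statprop}~b) in discrete form, which is exactly what you carry out. Your expansion of $J_h(v)-J_h(u_h)$, the complementarity bookkeeping for the slack vectors, and the final reduction to the quadratic form already analysed at the end of the proof of Theorem \ref{main2} all coincide with the paper's argument.
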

\begin{proof} The proof is essentially a discrete version of the proof of Theorem \ref{statprop} b). Note that (\ref{com6}) and the definition of $\eta$ imply that
\begin{displaymath}
p_j \geq \eta ( y_j - \psi_j), \, \, \mu_j \geq \eta \, \xi_j, \; j=1,\ldots,n
\end{displaymath}
so that $(u_h,y_h,p_h,\bx,\bmu)$ satisfies a discrete analogue of (\ref{com1c})--(\ref{com6c}). 
\end{proof}

\noindent
It is not difficult to see that the result of Theorem \ref{stationarylimit} also holds for a sequence of solutions $(\bar u_h,\bar y_h, \bar p_h)_{0<h \leq h_0}$ of 
(\ref{com1})-(\ref{com6}) satisfying the bounds (\ref{upbound}). The corresponding arguments in fact become a little bit easier because the penalisation term
is no longer present. Furthermore, the convergence result in Corollary \ref{cor1} holds as well. We omit the details.

\section{Numerical examples} \label{secn}
In this section we apply Theorem~\ref{main2} to some numerical examples taken from \cite{IK00}, \cite{MT13} and \cite{MRW15}. In Examples~1--3 the computational domain is given by $\Omega:=(0,1)\times(0,1)$, and consequently, the constant $\lambda_1$ from (\ref{lambda1}) has the value $\lambda_1=2\pi^2$. On the other hand, Example~4 is formulated on the L-shaped domain $\Omega:=(-1,0)\times (-1,1) \cup [0,1)\times (0,1)$ and $\lambda_1$ is approximated by solving a generalized eigenvalue problem leading to $\lambda_1 \approx 9.63977851$. In all of the examples, the domain $\Omega$ is partitioned using a uniform triangulation with mesh size $h=2^{-6}\sqrt{2}$.

\noindent
We solve (\ref{foc1}),(\ref{foc2}),(\ref{foc3}) using Newton's method with the stopping criterion
\[
\frac{1}{\alpha}\|p^{(k)}_h-p^{(k+1)}_h\|_{L^2(\Omega)} \leq 10^{-15},
\]
where $p^{(k)}_h$ denotes the discrete adjoint variable corresponding to the $k$-th iteration. We take the zero point as an initial guess for Newton's method and initialize our $\gamma-$homotopy with $\gamma=1$. As the value of $\gamma$ increases we take the solution of the system (\ref{foc1}),(\ref{foc2}),(\ref{foc3}) at the preceding value of $\gamma$ as starting value in the current Newton iteration.

\noindent
We introduce the sets of nodes 
\begin{align*}
&\mathcal{N}^+:=\{k \in \{1,\ldots,n\}: y_k-\psi_k > 0\},\\
&\mathcal{N}^0:=\{k \in \{1,\ldots,n\}: y_k-\psi_k =0\}, \text{ and}\\
&\mathcal{N}^-:=\{k \in \{1,\ldots,n\}: y_k-\psi_k < 0\}.
\end{align*}
Then, one can see from the quantity  $\min_{k \in \mathcal{N}^-} (y_k-\psi_k)$ the amount by which the state violates the obstacle constraint, which typically should tend to zero as the parameter $\gamma$ increases. We point out that the equality $y_k=\psi_k$ is often difficult to be observed on computers. In fact, it has never been detected when performing computations for the considered examples. Hence, we consider  directly  condition (\ref{optcondgammah}) as the set $\mathcal{N}^0$ is empty.  We shall   report for each example the values of
the quantities   $\eta$, $\min_{k \in \mathcal{N}^-} (y_k-\psi_k)$ and the number of Newton iterations,  denoted by $\# N$, as we increase the value of the penalization parameter $\gamma$. We stop increasing the parameter $\gamma$ once the linear system in Newton's method becomes too ill-conditioned. All the computations  are done using MATLAB~R2018a. 

In what follows we refer to the Lagrange interpolations of
\[
-\gamma^3 [(y_h-\psi)^-]^3 \text{ and } -3\gamma^3 [(y_h-\psi)^-]^2p_h
\]
as multipliers $\xi$ and $\mu$, respectively.

\begin{example} 
\label{example:IK4}
This is the Example~6.4 from \cite{IK00} with $f,y_0,\psi$ replaced by $-f$, $-y_0$ and $-\psi$. Here we choose the following data for $(\mathbb{P})$:
\begin{align*}
& \alpha=10^{-1}, \quad  y_0(x)=-(5x_1+x_2-1) \mbox{ in } \Omega, \quad  f(x)=-0.1 \mbox{ in } \Omega,\\
& \psi(x)=-4\big(x_1(x_1-1)+x_2(x_2-1)\big)-1.5  \mbox{ in } \Omega.
\end{align*}
We have 
\[
\alpha \lambda_1+\sqrt{\alpha^2 \lambda^2_1+\alpha} \approx 3.9730.
\]
The numerical results are reported in Table~\ref{table:IK4}.

We see that the condition  (\ref{optcondgammah}) holds for the considered values of $\gamma$, which indicates that the computed solution is the unique global minimum of $(\mathbb{P}^{\gamma}_h)$. Graphical illustration of the solution is provided in Figure~\ref{figure:ik4} for $\gamma=10^8$. We also observe that the violation of the obstacle constraint satisfies $\min_{k \in \mathcal{N}^-} (y_k-\psi_k) \sim -\gamma^{-1}$.

\begin{table}[h]
		\caption{Example~\ref{example:IK4} where $\alpha \lambda_1+\sqrt{\alpha^2 \lambda^2_1+\alpha} \approx 3.9730$.}
		\label{table:IK4}
		\begin{tabular}{ l  c  c c  c  c}
			\toprule
	    $\gamma$ &         $\min_{k \in \mathcal{N}^+} \frac{p_k}{y_k-\psi_k}$ &  $\min_{k \in \mathcal{N}^-} \frac{3p_k}{\psi_k-y_k}$&      $\eta$ &     $\min_{k \in \mathcal{N}^-} (y_k-\psi_k)$ & $\# N$ \\ 
		\midrule[1pt]
		1.0e+00&         -4.10942407e-05& 6.49648857e-01& -4.10942407e-05& -5.80096766e-01&    5\\
		1.0e+01&         -3.04816156e-04& 2.56513024e-01& -3.04816156e-04& -2.28879959e-01&    8\\
		1.0e+02&         -5.20715816e-04& 1.23095522e-01& -5.20715816e-04& -2.52527352e-02&   11\\
		1.0e+03&         -5.46497352e-04& 9.94266004e-02& -5.46497352e-04& -2.52516115e-03&   12\\
		1.0e+04&         -5.49770680e-04& 9.06835069e-02& -5.49770680e-04& -2.52508949e-04&   13\\
		1.0e+05&         -5.50113221e-04& 8.63210169e-02& -5.50113221e-04& -2.52508176e-05&   15\\
		1.0e+06&         -5.50622631e-04& 8.39380400e-02& -5.50622631e-04& -2.52524125e-06&   12\\
		1.0e+07&         -5.50679810e-04& 8.34709569e-02& -5.50679810e-04& -2.52538920e-07&   11\\
		1.0e+08&         -5.50685642e-04& 8.34241524e-02& -5.50685642e-04& -2.52542988e-08&   11\\
		1.0e+09&         -5.50686226e-04& 8.34194702e-02& -5.50686226e-04& -2.52543397e-09&   11\\
		1.0e+10&         -5.50686284e-04& 8.34190005e-02& -5.50686284e-04& -2.52543431e-10&   10\\
		1.0e+11&         -5.50686290e-04& 8.34189542e-02& -5.50686290e-04& -2.52543542e-11&   10\\
		1.0e+12&         -5.50686291e-04& 8.34185828e-02& -5.50686291e-04& -2.52542431e-12&   10\\
		1.0e+13&         -5.50686291e-04& 8.34167493e-02& -5.50686291e-04& -2.52520227e-13&    9\\
		1.0e+14&         -5.50686291e-04& 8.33984159e-02& -5.50686291e-04& -2.52575738e-14&    9\\
		1.0e+15&         -5.50686291e-04& 8.43368891e-02& -5.50686291e-04& -2.49800181e-15&    8\\

		\bottomrule 
	 \end{tabular}
\end{table}

	\begin{figure}
		
		\begin{subfigure}[b]{0.5\textwidth}
			\includegraphics[trim = 70mm 85mm 75mm 70mm, width=0.5\textwidth]{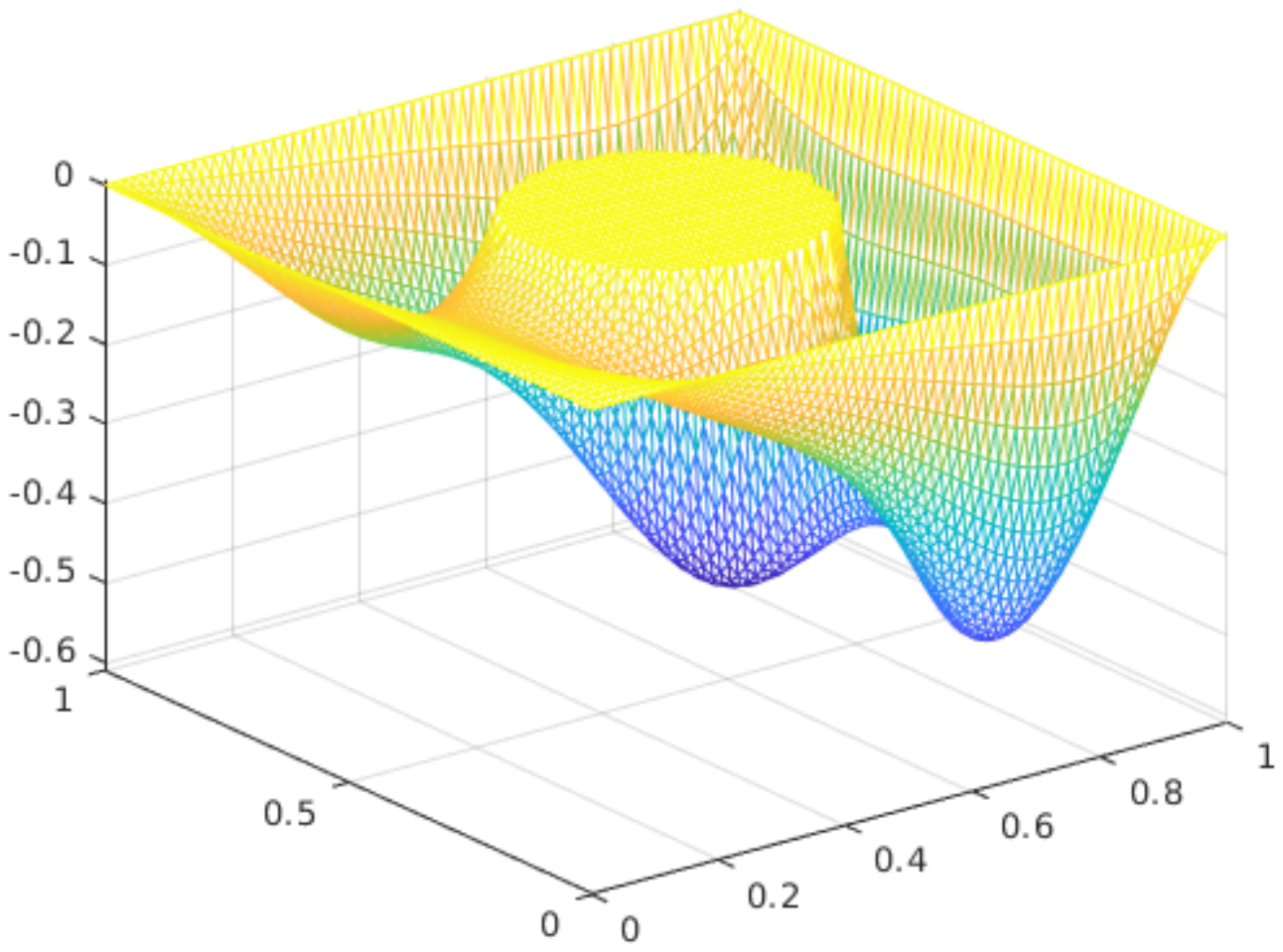}
			\caption{The optimal control}
		\end{subfigure}~ 
		\begin{subfigure}[b]{0.5\textwidth}
			\includegraphics[trim = 70mm 85mm 75mm 70mm, width=0.5\textwidth]{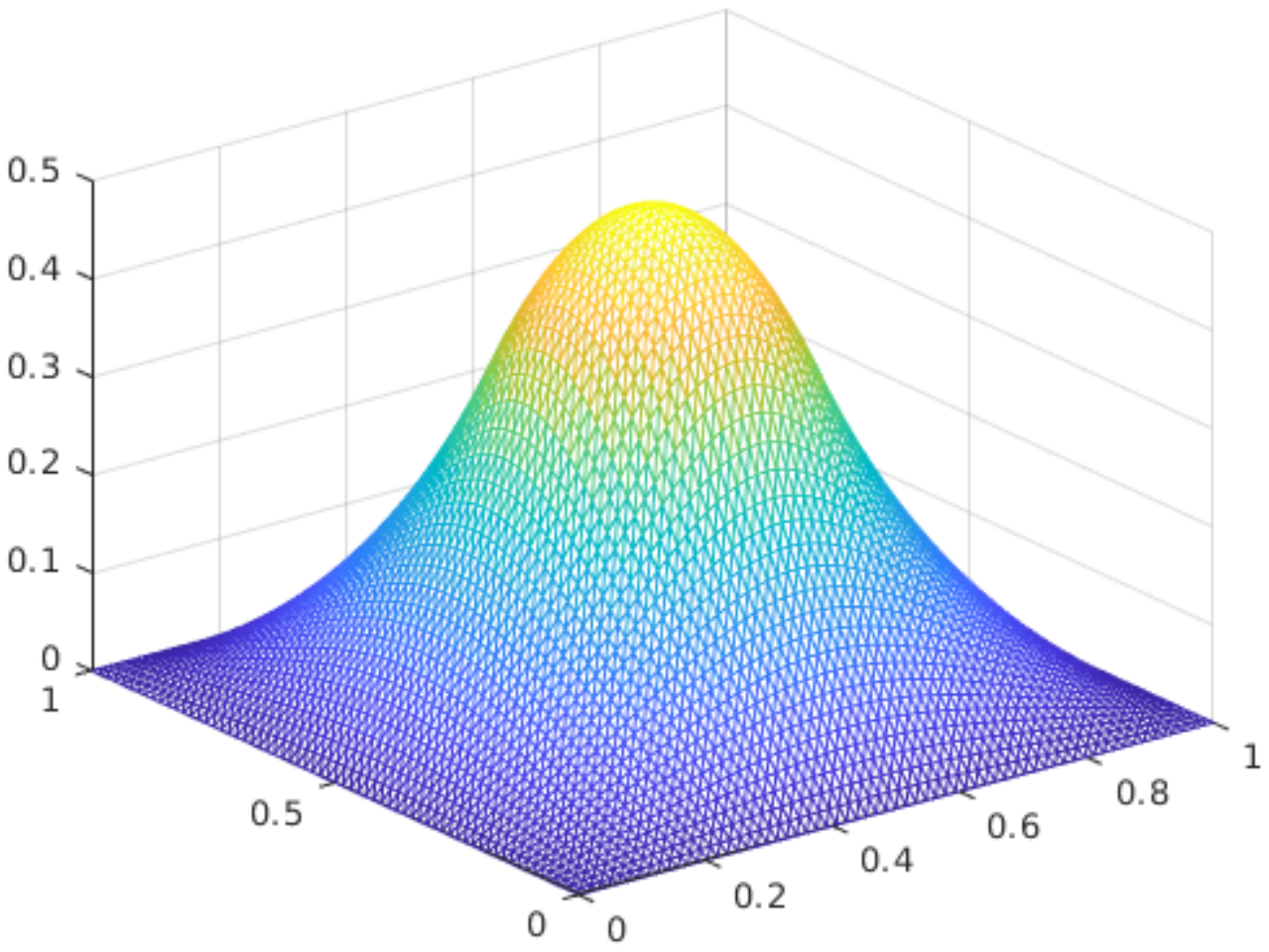}
			\caption{The optimal state}
		\end{subfigure}

		\begin{subfigure}[b]{0.5\textwidth}
			\includegraphics[trim = 70mm 85mm 75mm 70mm, width=0.5\textwidth]{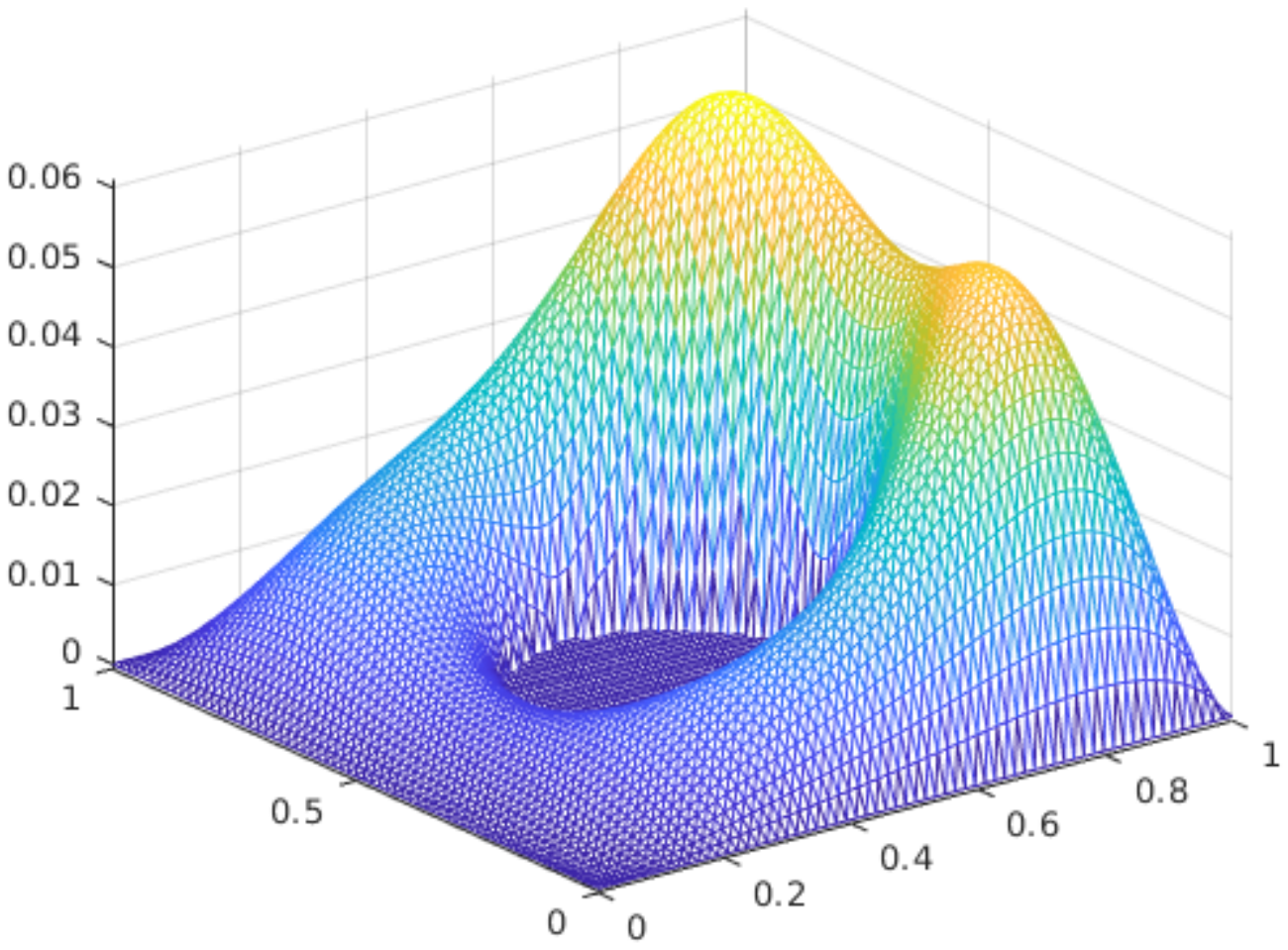}
			\caption{The optimal adjoint state}
		\end{subfigure}~
		\begin{subfigure}[b]{0.5\textwidth}
			\includegraphics[trim = 70mm 85mm 75mm 70mm, width=0.5\textwidth]{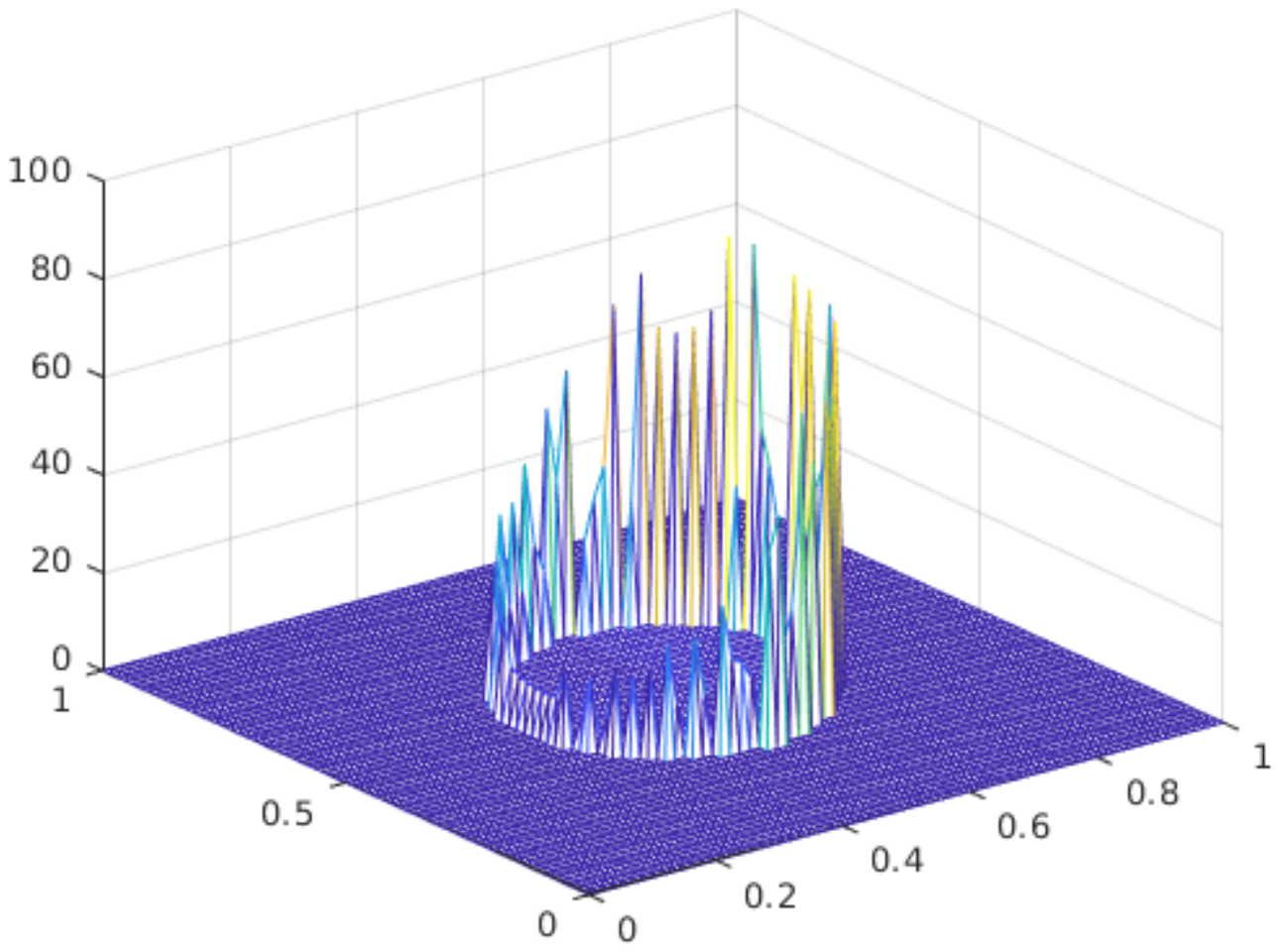}
			\caption{The multiplier $\mu$}
		\end{subfigure}
		\centering
		\begin{subfigure}[b]{0.5\textwidth}
			\includegraphics[trim = 70mm 85mm 75mm 70mm, width=0.5\textwidth]{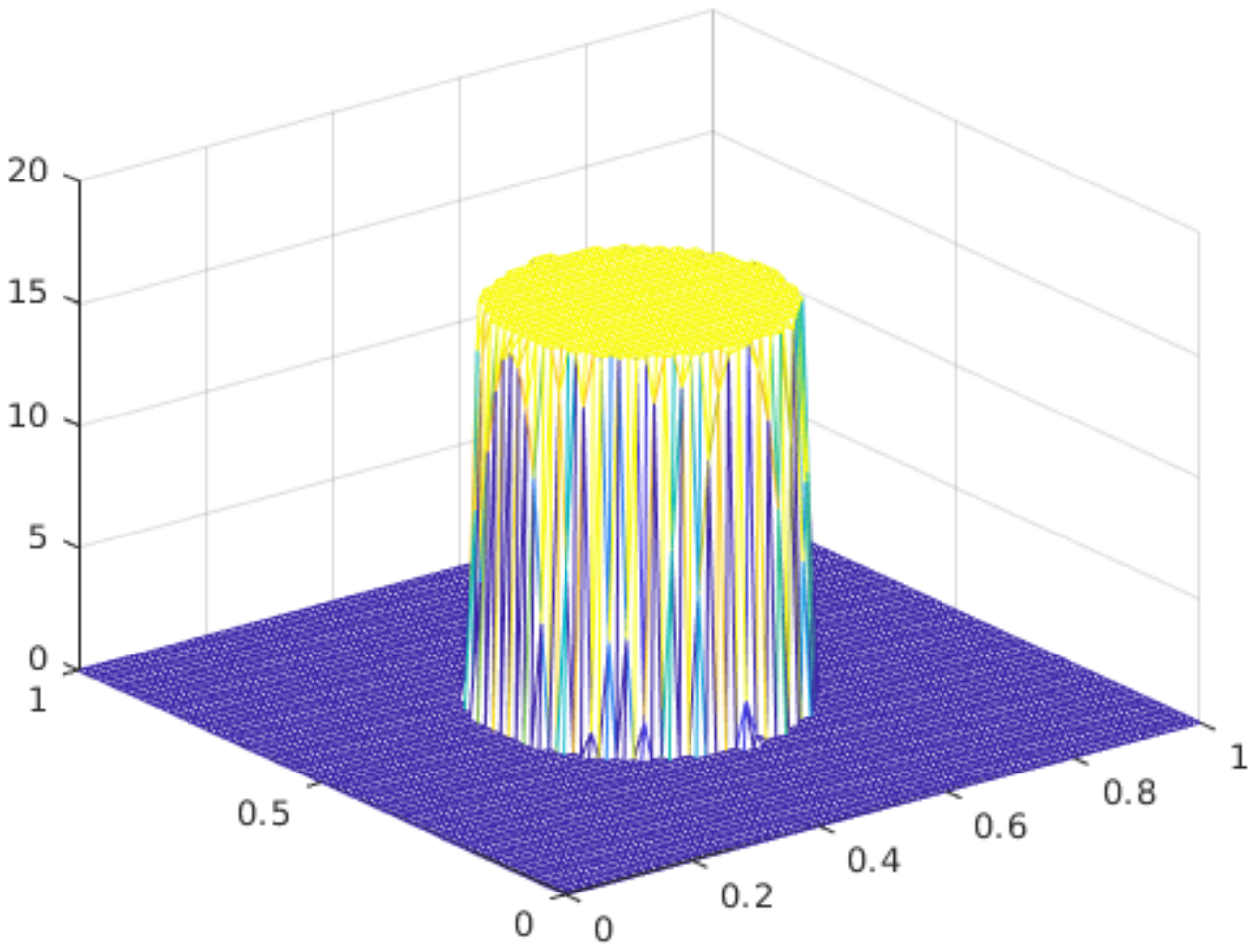}
			\caption{The multiplier $\xi$}
		\end{subfigure}
		
		\caption{Example~\ref{example:IK4}: The optimal control, state, adjoint state for $\gamma=10^8$.}\label{figure:ik4}
	\end{figure}
\end{example}

\begin{example}
\label{example:IK5}
This is  Example~6.5 from \cite{IK00} with lack of strict complementarity, where  $f,y_0$ are replaced by $-f$ and $-y_0$. Here we choose for $(\mathbb{P})$ the data  
\begin{align*}
& \alpha=10^{-1}, \quad  y_0(x)=-(5x_1+x_2-1) \mbox{ in } \Omega, \quad  f(x)=-(x_1-\frac{1}{2})\mbox{ in } \Omega, \quad \psi(x)=0 \mbox{ in } \Omega.
\end{align*}
We have 
\[
\alpha \lambda_1+\sqrt{\alpha^2 \lambda^2_1+\alpha} \approx 3.9730.
\]
The numerical results are reported in Table~\ref{table:ik5}. We see that the condition  (\ref{optcondgammah}) is satisfied for the considered values of $\gamma$, and hence, the unique global solution has been computed, which is presented in Figure~\ref{figure:ik5} for $\gamma=10^8$. We again observe that the violation of the obstacle constraint satisfies $\min_{k \in \mathcal{N}^-} (y_k-\psi_k) \sim -\gamma^{-1}$. This is also true in the subsequent examples.
	\begin{table}[h]
		\caption{Example~\ref{example:IK5} where $\alpha \lambda_1+\sqrt{\alpha^2 \lambda^2_1+\alpha} \approx 3.9730$.}
		\label{table:ik5}
		\begin{tabular}{ l  c  c  c  c  c}
			\toprule
			 $\gamma$ &         $\min_{k \in \mathcal{N}^+} \frac{p_k}{y_k-\psi_k}$ &  $\min_{k \in \mathcal{N}^-} \frac{3p_k}{\psi_k-y_k}$&      $\eta$ &     $\min_{k \in \mathcal{N}^-} (y_k-\psi_k)$ & $\# N$ \\  
			\midrule[1pt]
	      1.0e+00&         -9.18374766e-01& 5.31348016e+00& -9.18374766e-01& -8.34645660e-02&    4\\
	      1.0e+01&         -9.72376118e-01& 5.75321935e+00& -9.72376118e-01& -5.91117294e-02&    6\\
	      1.0e+02&         -1.01940110e+00 &8.79601682e+00& -1.01940110e+00& -7.28493127e-03&   11\\
	      1.0e+03&         -1.02120448e+00 &9.45008887e+00& -1.02120448e+00& -7.34501977e-04&   12\\
	      1.0e+04 &        -1.02054664e+00 &8.61664669e+00& -1.02054664e+00& -7.65785826e-05&   12\\
	      1.0e+05 &        -1.02044429e+00 &8.29383499e+00& -1.02044429e+00& -7.76577329e-06&   13\\
	      1.0e+06 &        -1.02030934e+00 &8.19782461e+00& -1.02030934e+00& -7.83603514e-07&   15\\
	      1.0e+07 &        -1.02028019e+00 &8.13627555e+00& -1.02028019e+00& -7.85170648e-08&   13\\
	      1.0e+08 &        -1.02027779e+00 &8.12975658e+00& -1.02027779e+00& -7.85327708e-09&   11\\
	      1.0e+09 &        -1.02027759e+00 &8.12910469e+00& -1.02027759e+00& -7.85343418e-10&   11\\
	      1.0e+10 &        -1.02027757e+00 &8.12903950e+00& -1.02027757e+00& -7.85344989e-11&   10\\
	      1.0e+11 &        -1.02027757e+00 &8.12903298e+00& -1.02027757e+00& -7.85345146e-12&   10\\
	      1.0e+12 &        -1.02027757e+00 &8.12903233e+00& -1.02027757e+00& -7.85345161e-13&   10\\
	       
			\bottomrule 
		\end{tabular}
	\end{table}
	
	\begin{figure}
		
		\begin{subfigure}[b]{0.5\textwidth}
			\includegraphics[trim = 70mm 85mm 75mm 70mm, width=0.5\textwidth]{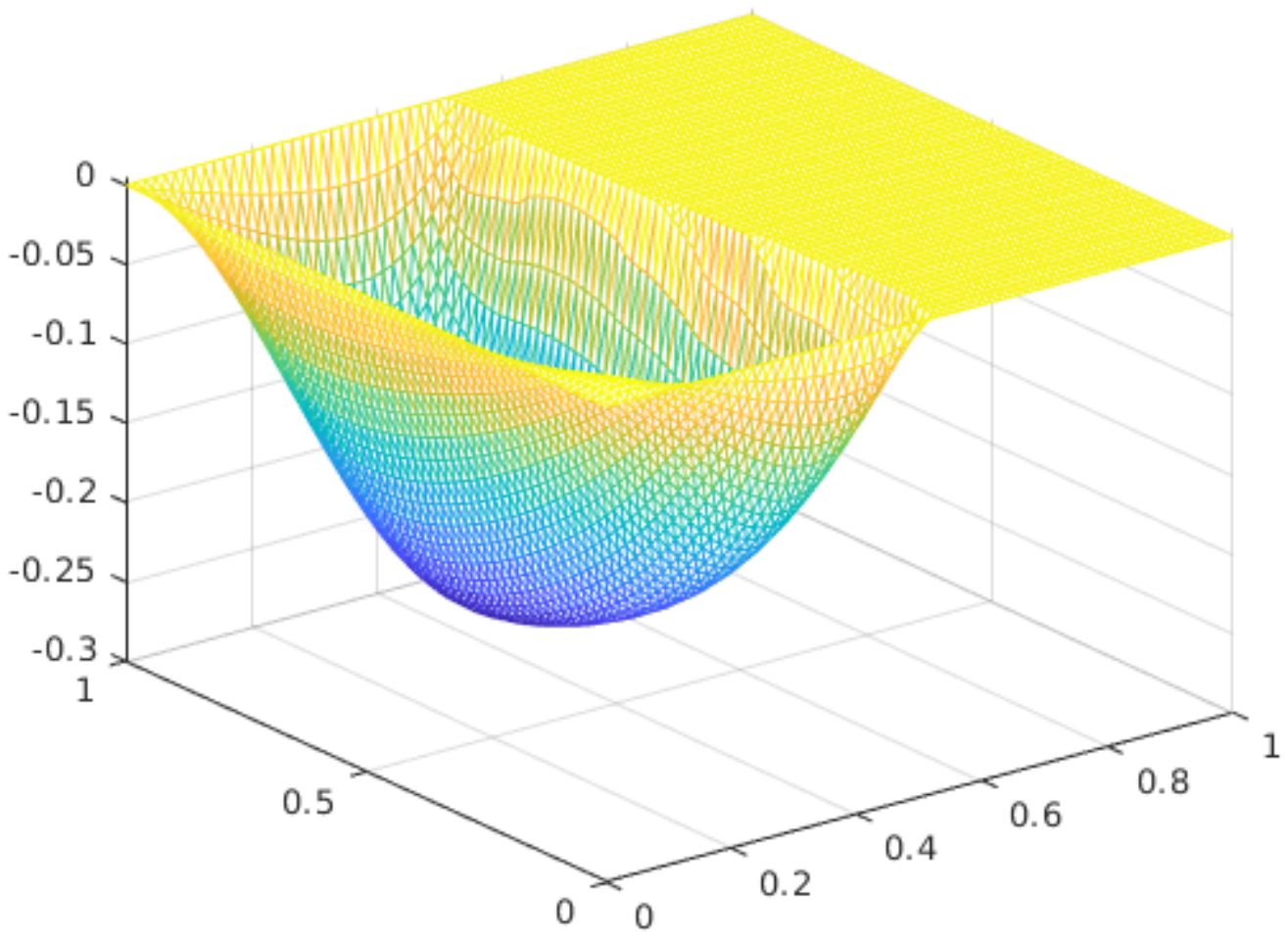}
			\caption{The optimal control}
		\end{subfigure}~ 
		\begin{subfigure}[b]{0.5\textwidth}
			\includegraphics[trim = 70mm 85mm 75mm 70mm, width=0.5\textwidth]{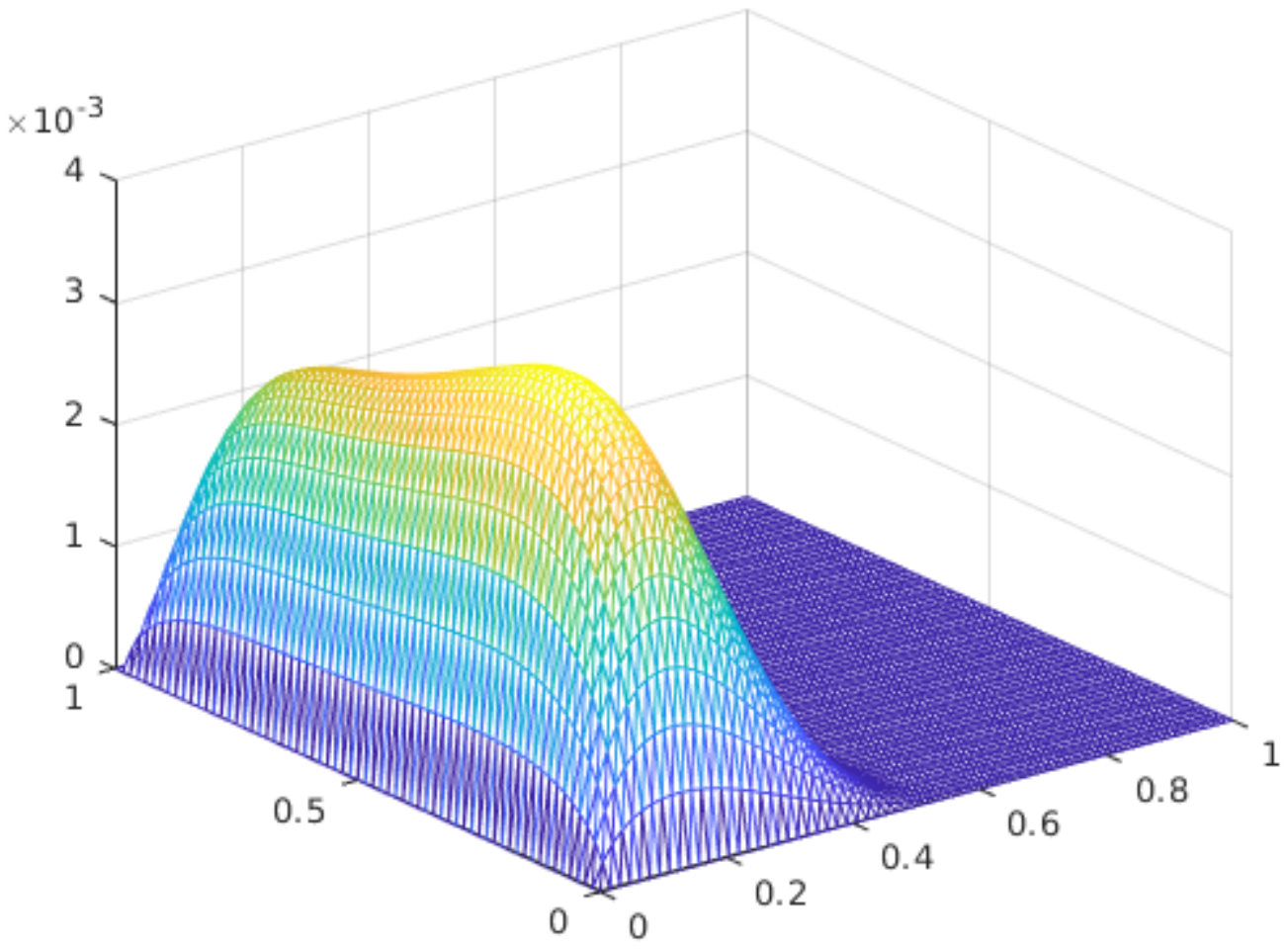}
			\caption{The optimal state}
		\end{subfigure}
		\begin{subfigure}[b]{0.5\textwidth}
			\includegraphics[trim = 70mm 85mm 75mm 70mm, width=0.5\textwidth]{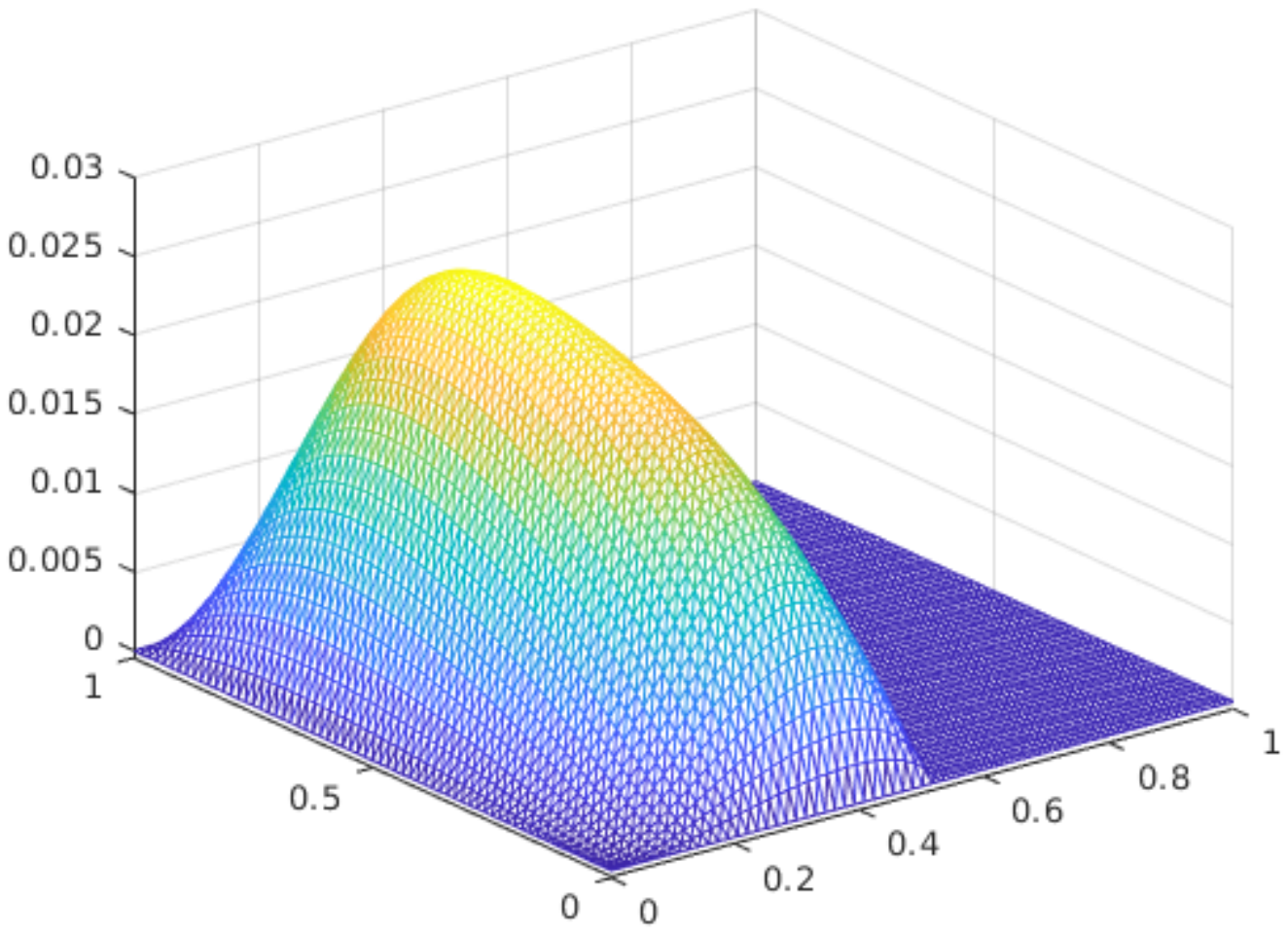}
			\caption{The optimal adjoint state}
		\end{subfigure}~
		\begin{subfigure}[b]{0.5\textwidth}
			\includegraphics[trim = 70mm 85mm 75mm 70mm, width=0.5\textwidth]{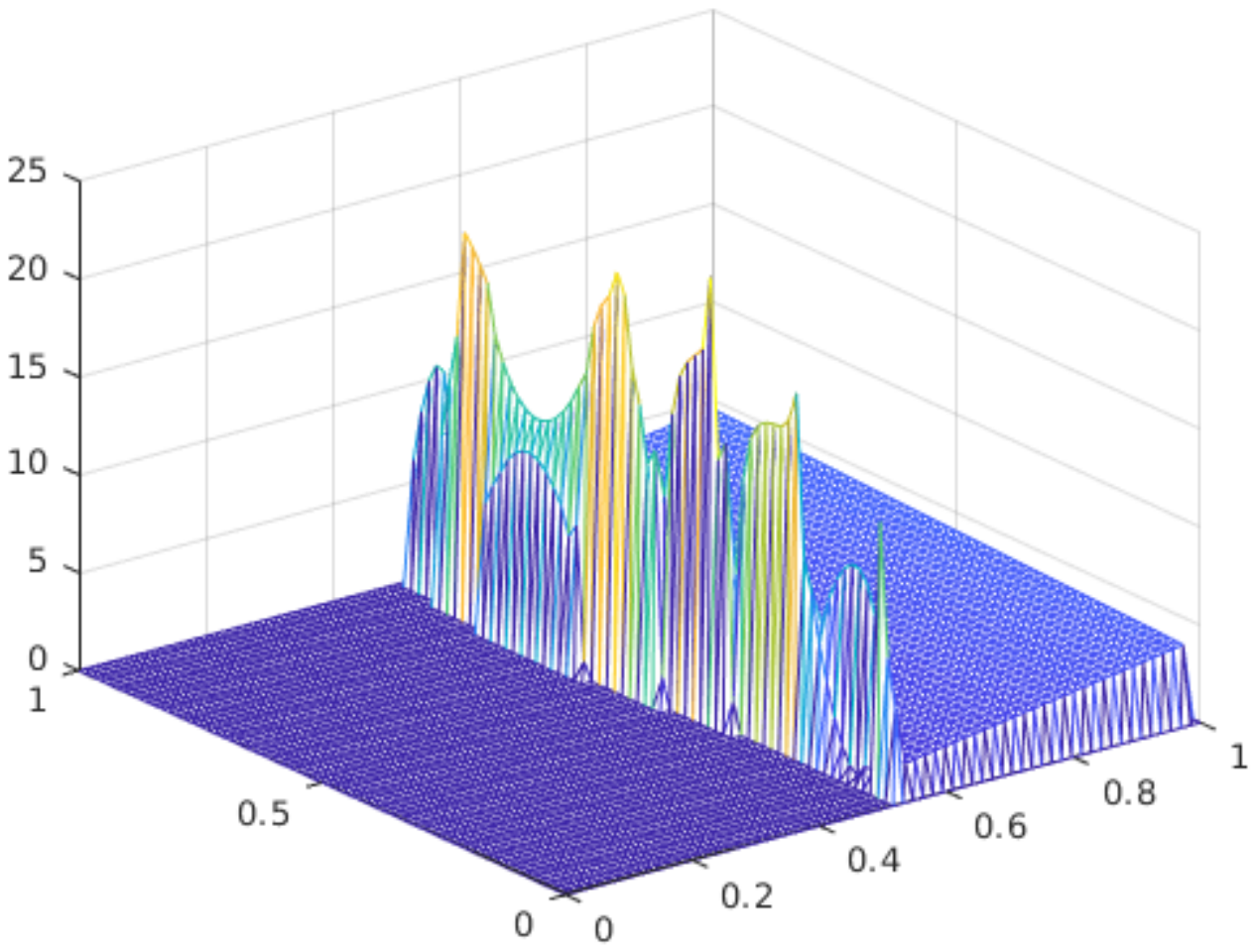}
			\caption{The multiplier $\mu$}
		\end{subfigure}
		\centering
		\begin{subfigure}[b]{0.5\textwidth}
			\includegraphics[trim = 70mm 85mm 75mm 70mm, width=0.5\textwidth]{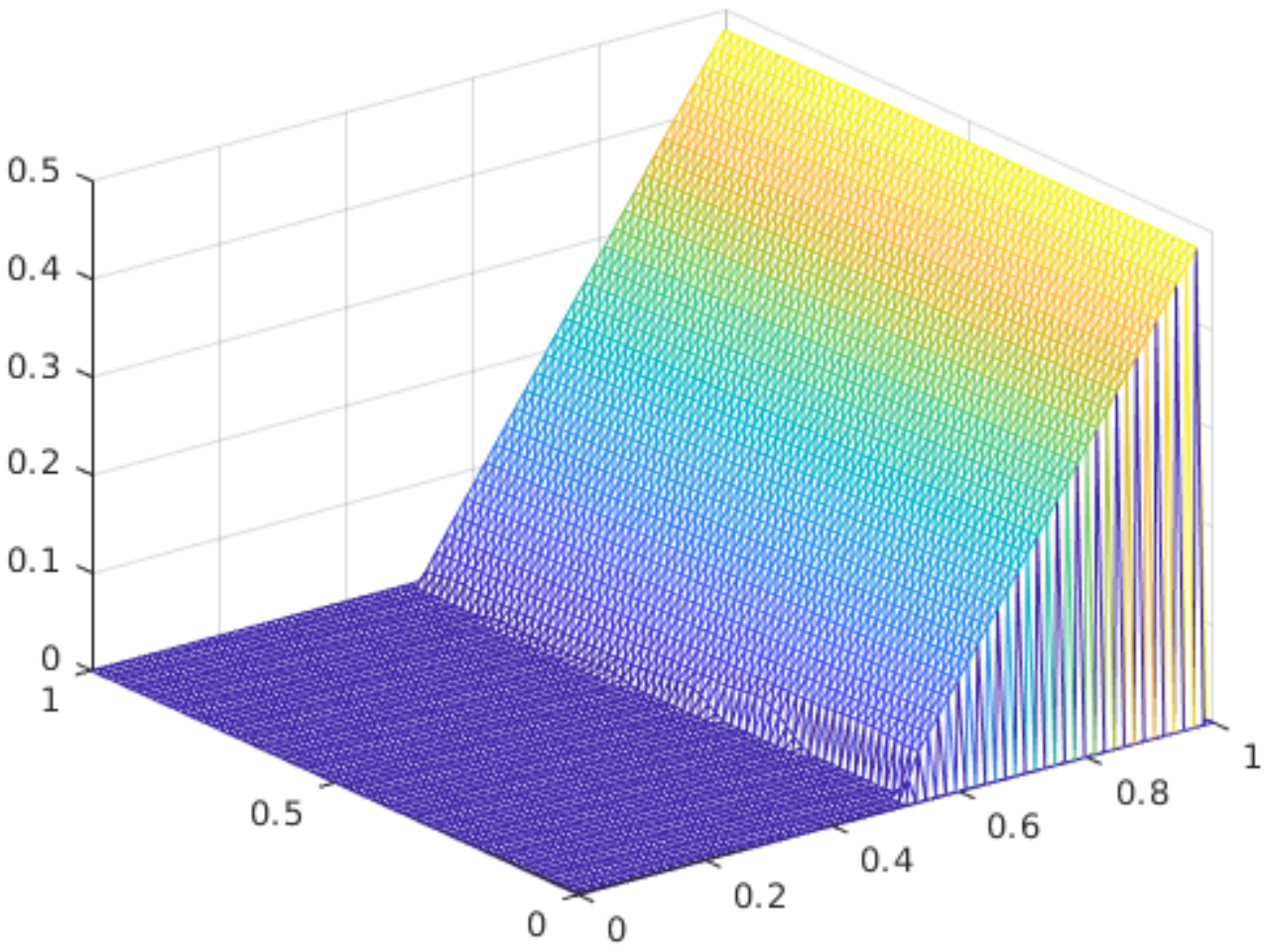}
			\caption{The multiplier $\xi$}
		\end{subfigure}

		\caption{Example~\ref{example:IK5}: The optimal control, state and adjoint state for $\gamma=10^8$.}\label{figure:ik5}
	\end{figure}

\end{example}

\begin{example} 
\label{example:mt1}
The data of this Example~1 is taken from \cite{MT13}, where  an exact solution for $(\mathbb{P})$ is constructed as follows;
\begin{align*}
	& \alpha=1,  \quad \psi=0 \mbox{ in } \Omega,\\
	& f(x)=-\Delta  y(x)-\xi(x) + \frac{1}{\alpha}  p(x)   \mbox{ in } \Omega\\
	& y_0(x)=\begin{cases}
	y(x)+ \Delta p_1(Q^tx)       & \quad \text{ in } \Omega_1,\\
	y(x)  & \quad \text{otherwise}.
	\end{cases}
\end{align*}
Here $y$ denotes the optimal state with the corresponding adjoint state  $p$ and slackness variable $\xi$, which are defined by 
\begin{align*}
	y(x_1,x_2)&=\begin{cases}
	y_1(x_1)\cdot y_2(x_2)       & \quad \text{ in } (0,0.5)\times (0,0.8),\\
	0  & \quad \text{otherwise},
	\end{cases}\\
	p(x)&=\begin{cases}
	p_1(Q^tx)       & \quad \text{ in } \Omega_1,\\
	0  & \quad \text{otherwise},
	\end{cases}\\
	\xi(x_1,x_2)&=\begin{cases}
	y_1(x_1-0.5) \cdot y_2(x_2)       & \quad \text{ in } (0.5,1)\times (0,0.8),\\
	0  & \quad \text{otherwise}.
	\end{cases}
\end{align*}
Moreover, $y_1$, $y_2$, and $p_1$ are the functions 
\begin{align*}
	& y_1(x_1)=-4096x_1^6+6144x_1^5-3072x_1^4+512x_1^3, \\
	& y_2(x_2)=-244.140625x_2^6+585.9375x_2^5-468.75x_2^4+125x_2^3,  \\
	& p_1(x_1,x_2)=(-200(x_1-0.8)^2+0.5)(-200(x_2-0.9)^2+0.5),
\end{align*}
and $\Omega_1$ is the  square with midpoint (0.8,0.9) and edge length 0.1 after being rotated by the matrix 
\[
Q=\begin{bmatrix}
\cos\frac{\pi}{6}& -\sin\frac{\pi}{6} \\
\sin\frac{\pi}{6}& \cos\frac{\pi}{6}
\end{bmatrix}
\]
around its midpoint. This example contains the biactive set
\[
\{x\in \Omega; y(x) = \xi(x) = 0\} \equiv [0, 1] \times [0.8, 1],
\]
which makes its numerical treatment challenging. Furthermore should we note that in this example the cost functional is of the form
\[
J(y,u)=\frac{1}{2}\|y-y_0\|_{L^2(\Omega)}^2+\frac{1}{2}\|u-u_d\|_{L^2(\Omega)}^2
\]
with $u_d=u+\frac{1}{\alpha}p$. We account for $u_d$ in the setting of $f$ above. Our theory also is valid in this situation without further modifications, see the proof of Lemma 2.1. 

From the previous data we have
\[
\alpha \lambda_1+\sqrt{\alpha^2 \lambda^2_1+\alpha} \approx 39.5037.
\]

We provide the numerical results in Table~\ref{table:mt1}. Again, the unique global minimum has been computed and the corresponding graphs  are illustrated in Figure~\ref{figure:mt1} when $\gamma=10^8$.
	
	\begin{table}[h]
		\caption{Example~\ref{example:mt1} with $\alpha \lambda_1+\sqrt{\alpha^2 \lambda^2_1+\alpha} \approx 39.5037$}.
		\label{table:mt1}
		\begin{tabular}{ l  c  c c c  c}
			\toprule
		 $\gamma$ &         $\min_{k \in \mathcal{N}^+} \frac{p_k}{y_k-\psi_k}$ &  $\min_{k \in \mathcal{N}^-} \frac{3p_k}{\psi_k-y_k}$&      $\eta$ &     $\min_{k \in \mathcal{N}^-} (y_k-\psi_k)$ & $\# N$ \\   
			\midrule[1pt]
		    1.0e+00&        1.63643751e-02& 3.74767766e+00& 0.00000000e+00& -2.10533852e-02&    3\\
		    1.0e+01&         1.58450462e-02& 3.61352870e+00& 0.00000000e+00& -2.07865748e-02&    4\\
		    1.0e+02&         1.97319240e-03& 1.04106448e-01& 0.00000000e+00& -8.25007721e-03&    8\\
		    1.0e+03&         -1.25202529e-02& -3.51210686e-01& -3.51210686e-01& -9.83097618e-04&   11\\
		    1.0e+04&         -2.42618027e-01& -3.37078927e-01& -3.37078927e-01& -9.97061633e-05&   12\\
		    1.0e+05&        -1.36127707e-01& -1.12485742e-01& -1.36127707e-01& -1.02473476e-05&   13\\
		    1.0e+06&         -9.75060053e-02& -3.06984382e-02& -9.75060053e-02& -1.03346212e-06&   15\\
		    1.0e+07&         -8.26915590e-02& -3.18139489e-02& -8.26915590e-02& -1.03449193e-07&   13\\
		    1.0e+08&         -1.81555278e-01& -3.47707710e-02& -1.81555278e-01& -1.03459974e-08&   14\\
		    1.0e+09&         -2.20032200e-01& -4.44457854e-02& -2.20032200e-01& -1.03461058e-09&   15\\
		    1.0e+10&         -2.24336956e-01& -4.55306972e-02& -2.24336956e-01& -1.03461166e-10&   17\\
		    1.0e+11&         -2.24772813e-01& -4.56405644e-02& -2.24772813e-01& -1.03461177e-11&   16\\
			\bottomrule
		\end{tabular}
	\end{table}
	
	\begin{figure}
		\begin{subfigure}[b]{0.5\textwidth}
			\includegraphics[trim = 70mm 85mm 75mm 70mm, width=0.5\textwidth]{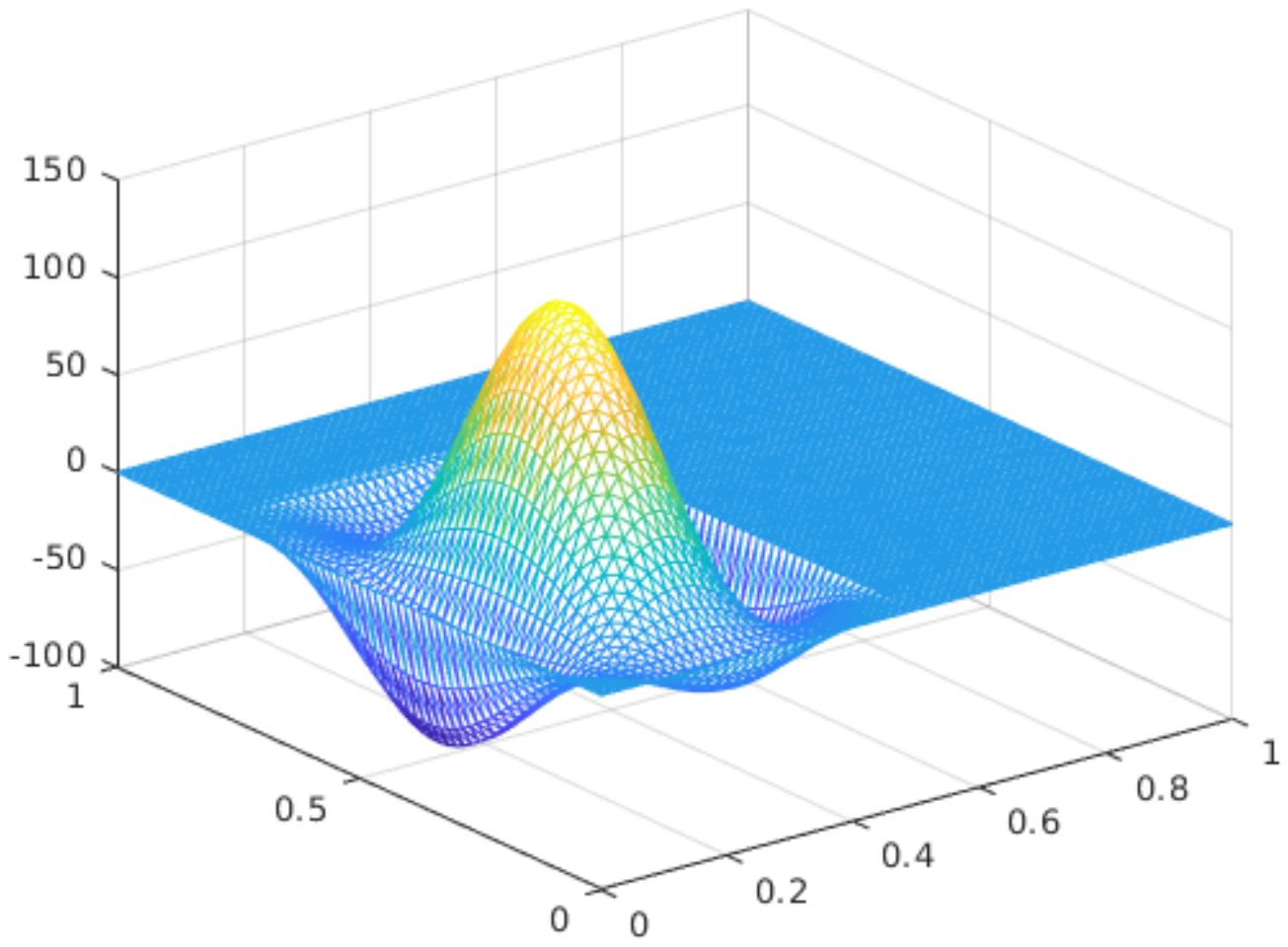}
			\caption{The optimal control}
		\end{subfigure}~ 
		\begin{subfigure}[b]{0.5\textwidth}
			\includegraphics[trim = 70mm 85mm 75mm 70mm, width=0.5\textwidth]{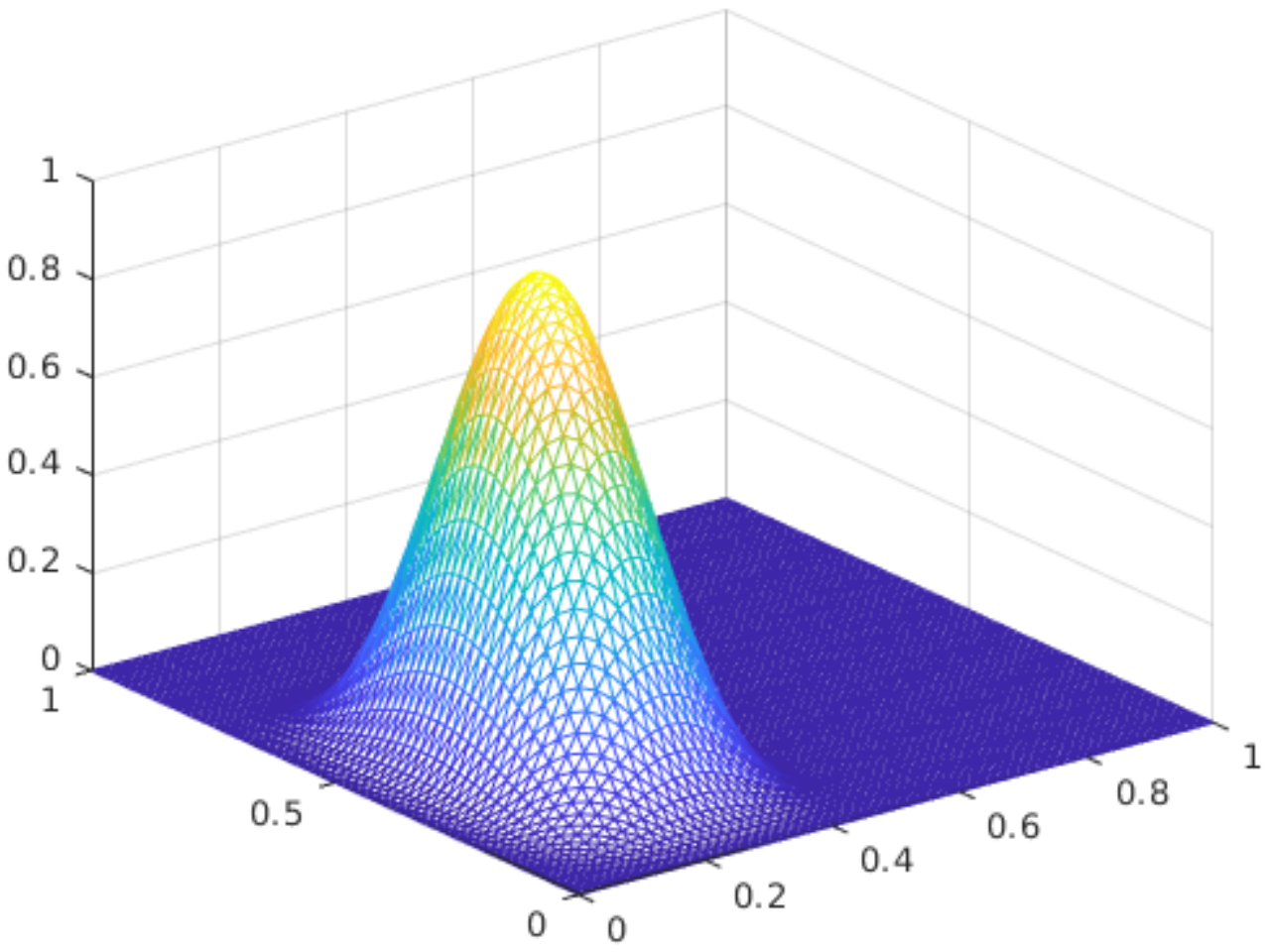}
			\caption{The optimal state}
		\end{subfigure}
		\begin{subfigure}[b]{0.5\textwidth}
			\includegraphics[trim = 70mm 85mm 75mm 70mm, width=0.5\textwidth]{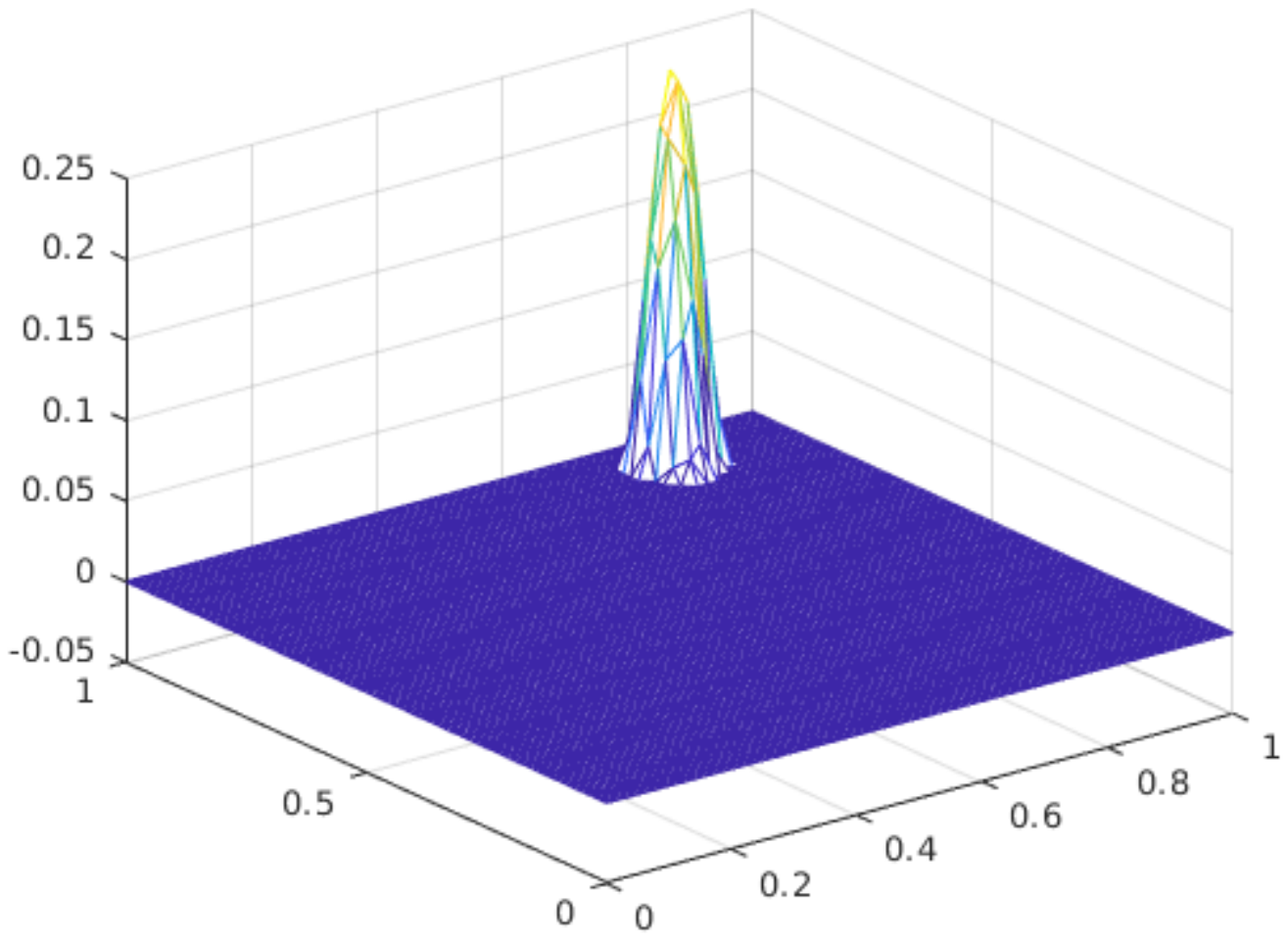}
			\caption{The optimal adjoint state}
		\end{subfigure}~
		\begin{subfigure}[b]{0.5\textwidth}
			\includegraphics[trim = 70mm 85mm 75mm 70mm, width=0.5\textwidth]{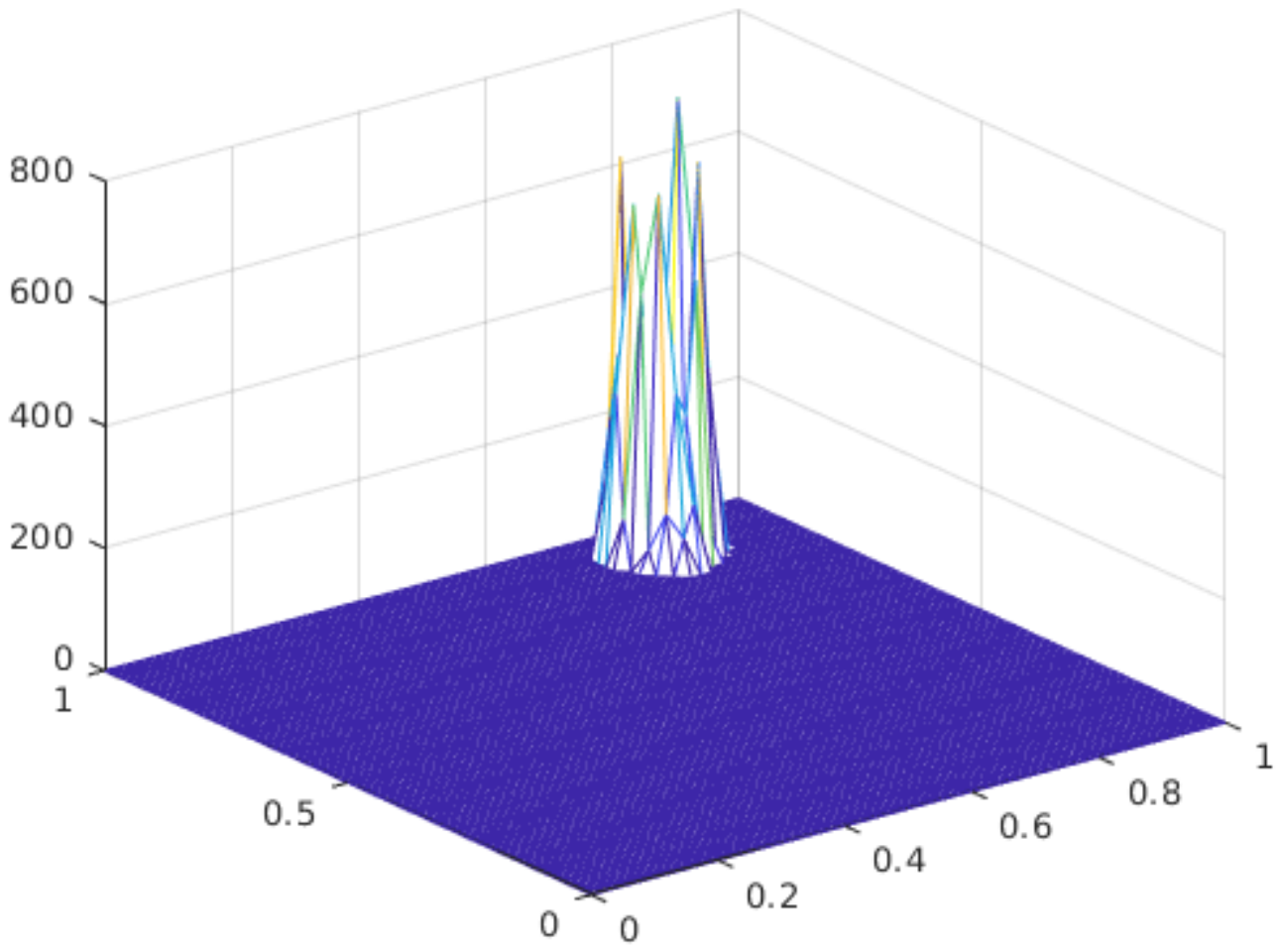}
			\caption{The multiplier $\mu$}
		\end{subfigure}
		
		\centering
		\begin{subfigure}[b]{0.5\textwidth}
			\includegraphics[trim = 70mm 85mm 75mm 70mm, width=0.5\textwidth]{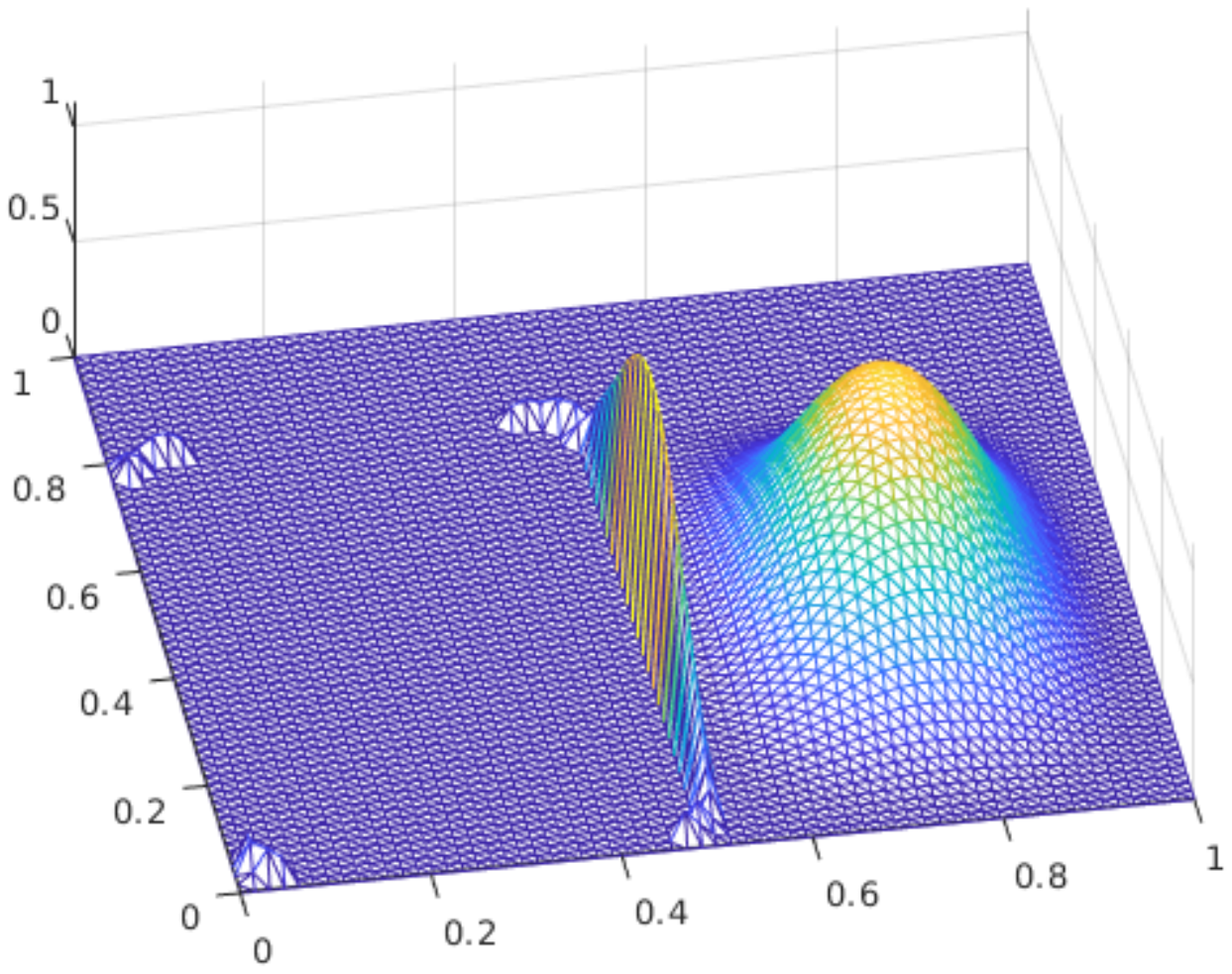}
			\caption{The multiplier $\xi$}
		\end{subfigure}~
		\caption{Example~\ref{example:mt1}: The optimal control, state, adjoint state, and the multipliers for $\gamma=10^8$.}\label{figure:mt1}
	\end{figure}
\end{example} 

\begin{example}
	\label{example:mrw2}
	The data for this problem is taken from Example~6.2 in \cite{MRW15} and for $(\mathbb{P})$ reads   
	\begin{align*}
	& \alpha=1, \quad \psi(x)=0 \mbox{ in } \Omega, \quad f(x)=\frac{1}{2}+\frac{1}{2}(x_1-x_2)\mbox{ in } \Omega,\\
	& y_0(x)=\begin{cases}
	-1       \quad \text{ if } |x|\geq 0.1,\\
	1-100x_1^2-50x_2^2  \quad \text{otherwise}.
	\end{cases}
	\end{align*}
	We have 
	\[
	\alpha \lambda_1+\sqrt{\alpha^2 \lambda^2_1+\alpha} \approx 19.3313.
	\]
	The numerical results are reported in Table~\ref{table:mrw2}. The condition  (\ref{optcondgammah}) is satisfied, and hence, the unique global solution has been computed, which we illustrate in Figure~\ref{figure:mrw2} for $\gamma=10^8$. 
	\begin{table}[h]
		\caption{Example~\ref{example:mrw2} where $\alpha \lambda_1+\sqrt{\alpha^2 \lambda^2_1+\alpha} \approx 19.3313$.}
		\label{table:mrw2}
		\begin{tabular}{ l    c  c c c  c}
			\toprule
			 $\gamma$ &         $\min_{k \in \mathcal{N}^+} \frac{p_k}{y_k-\psi_k}$ &  $\min_{k \in \mathcal{N}^-} \frac{3p_k}{\psi_k-y_k}$&      $\eta$ &     $\min_{k \in \mathcal{N}^-} (y_k-\psi_k)$ & $\# N$ \\   
			\midrule[1pt]
		    1.0e+00&         1.28191544e+00& 9.12899685e+00& 0.00000000e+00& -7.61042379e-03&    3\\
		    1.0e+01&         1.28189321e+00& 9.11848759e+00& 0.00000000e+00& -7.59555549e-03&    3\\
		    1.0e+02&         1.27854041e+00& 7.14870263e+00& 0.00000000e+00& -4.77178798e-03&    7\\
		    1.0e+03&         1.27598179e+00& 2.95273878e+00& 0.00000000e+00& -7.05858478e-04&   11\\
		    1.0e+04&         1.27568823e+00& 2.30861182e+00& 0.00000000e+00& -7.60379296e-05&   12\\
		    1.0e+05&         1.27561164e+00& 2.14128368e+00& 0.00000000e+00& -7.76063518e-06&   12\\
		    1.0e+06&         1.27562640e+00& 2.08304474e+00& 0.00000000e+00& -7.81857173e-07&   13\\
		    1.0e+07&         1.27563830e+00& 2.06636877e+00& 0.00000000e+00& -7.84995677e-08&   15\\
		    1.0e+08&         1.27563972e+00& 2.06470139e+00& 0.00000000e+00& -7.85310208e-09&   11\\
		    1.0e+09&         1.27563987e+00& 2.06453466e+00& 0.00000000e+00& -7.85341667e-10&   11\\
		    1.0e+10&         1.27563988e+00& 2.06451798e+00& 0.00000000e+00& -7.85344814e-11&   10\\
		    1.0e+11&         1.27563988e+00& 2.06451631e+00& 0.00000000e+00& -7.85345128e-12&   10\\
		    1.0e+12&         1.27563989e+00& 2.06451615e+00& 0.00000000e+00& -7.85345160e-13&   10\\
		    \bottomrule
		\end{tabular}
	\end{table}
	
	\begin{figure}
		\begin{subfigure}[b]{0.5\textwidth}
			\includegraphics[trim = 70mm 85mm 75mm 70mm, width=0.5\textwidth]{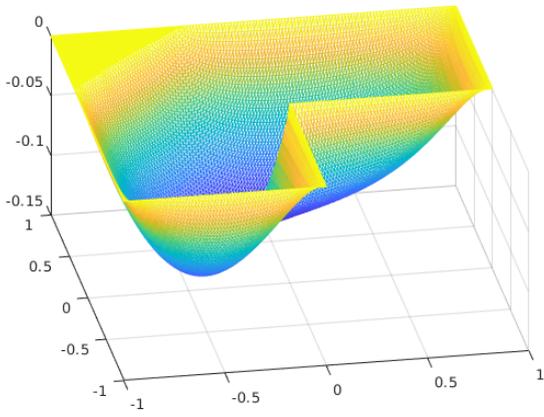}
			\caption{The optimal control}
		\end{subfigure}~ 
		\begin{subfigure}[b]{0.5\textwidth}
			\includegraphics[trim = 70mm 85mm 75mm 70mm, width=0.5\textwidth]{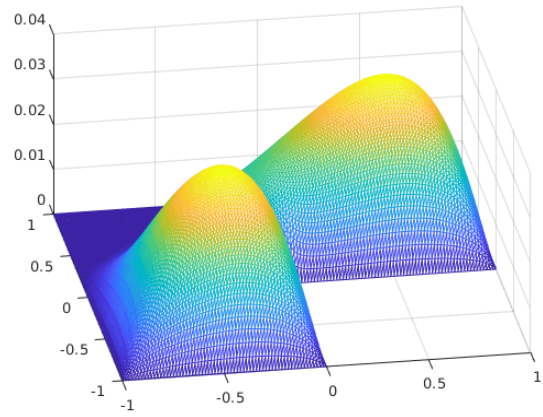}
			\caption{The optimal state}
		\end{subfigure}
		\begin{subfigure}[b]{0.5\textwidth}
			\includegraphics[trim = 70mm 85mm 75mm 70mm, width=0.5\textwidth]{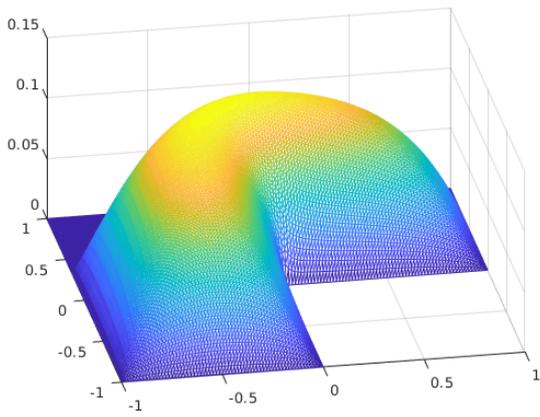}
			\caption{The optimal adjoint state}
		\end{subfigure}~
		\begin{subfigure}[b]{0.5\textwidth}
			\includegraphics[trim = 70mm 85mm 75mm 70mm, width=0.5\textwidth]{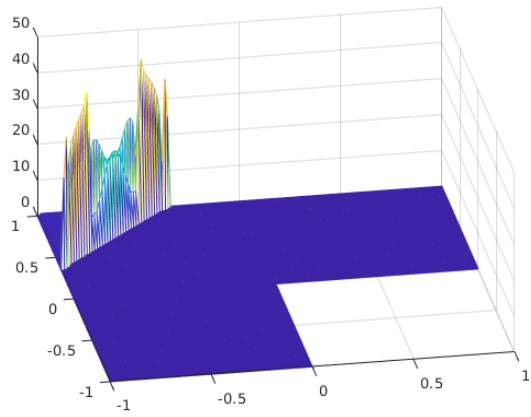}
			\caption{The multiplier $\mu$}
		\end{subfigure}
		
		\centering
		\begin{subfigure}[b]{0.5\textwidth}
			\includegraphics[trim = 70mm 85mm 75mm 70mm, width=0.5\textwidth]{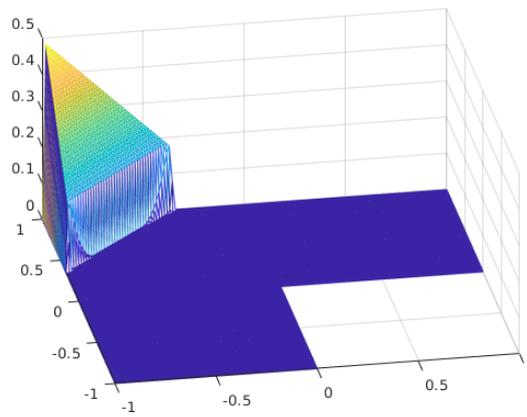}
			\caption{The multiplier $\xi$}
		\end{subfigure}
		\caption{Example~\ref{example:mrw2}: The optimal control, state, adjoint state, and the multipliers for $\gamma=10^8$.}\label{figure:mrw2}
	\end{figure}

\end{example}

\newpage

\bibliographystyle{plain}
\bibliography{references}

\begin{thebibliography}{99}
\bibitem{Hinze1}
A. Ahmad Ali, K. Deckelnick and M. Hinze, {\it Global minima for semilinear optimal control problems}. Comput. Optim. Appl. \textbf{65}, 261--288 (2016).
\bibitem{Hinze2} A. Ahmad Ali, K. Deckelnick and M. Hinze, {\it Error analysis for global minima of semilinear optimal control problems}. Mathematical Control and related Fields \textbf{8}, 195--215 (2018).
\bibitem{Ba84} V. Barbu, {\it Optimal control of variational inequalities}. Res. Notes Math. {\bf 100}, Pitman, Boston, 1984.
\bibitem{B15} S. Bartels, {\it Numerical methods for nonlinear partial differential equations}. Springer Series in Computational Mathematics, {\bf 47}. Springer, Cham, 2015.
\bibitem{BM00} M. Bergounioux and F. Mignot, {\it Optimal control of obstacle problems: existence of Lagrange multipliers}.  ESAIM Control Optim. Calc. Var. {\bf 5}, 45--70 (2000).
\bibitem{H01} M. Hinterm\"uller, {\it Inverse coefficient problems for variational inequalities: Optimality conditions and numerical realization}. 
ESAIM Math. Model. Numer. Anal. {\bf 35}, 129--152 (2001). 
\bibitem{HK11} M. Hinterm\"uller and I. Kopacka, {\it A smooth penalty approach and a nonlinear multigrid algorithm for elliptic MPECs}. Comput. Optim. Appl. {\bf 50}, 111--145 (2011).
\bibitem{IK00}  K. Ito and K. Kunisch, {\it Optimal control of elliptic variational inequalities}. Appl. Math. Optim. {\bf 41}, 343--364 (2000).
\bibitem{KW12a} K. Kunisch and D. Wachsmuth, {\it Path--following for optimal control of stationary variational inequalities}. Comput. Opt. Appl. {\bf 51}, 1345--1373 (2012).
\bibitem{KW12b} K. Kunisch and D. Wachsmuth, {\it Sufficient optimality conditions and semi--smooth Newton methods for optimal control of stationary variational inequalities}. 
 ESAIM Control Optim. Calc. Var. {\bf 18}, no. 2, 520–-547 (2012).
\bibitem{MT13}  C. Meyer and O. Thoma, {\it A priori finite element error analysis for optimal control of the obstacle problem}. SIAM J. Numer. Anal. {\bf 51}, 605--628 (2013).
\bibitem{MRW15}  C. Meyer, A. Rademacher, and W. Wollner, {\it Adaptive optimal control of the obstacle problem}. SIAM J. Sci. Comput. {\bf 37}, 918--945 (2015).
\bibitem{M76} F. Mignot, {\it Contr\^{o}le dans les in\'{e}quations variationelles  elliptiques}. J. Funct. Anal. {\bf 22}, 130--185 (1976).
\bibitem{MP84} F. Mignot and J.-P. Puel, {\it Optimal control in some variational inequalities}. SIAM J. Control Optim. {\bf 22} (3), 466--476 (1984).
\bibitem{SW13} A. Schiela and D. Wachsmuth, {\it Convergence analysis of smoothing methods  for optimal control of stationary variational inequalities  with control constraints}.
 ESAIM Math. Model. Numer. Anal. {\bf 47}, no. 3, 771--787 (2013). 
\bibitem{W14} G. Wachsmuth, {\it Strong stationarity for optimal control of the obstacle problem with control constraints}. SIAM J. Optim. {\bf 24} (1), 1914--1932 (2014).
\end{thebibliography}
\end{document}